\newcommand{\beq}{\begin{equation}}
\newcommand{\eeq}{\end{equation}}
\DeclareMathOperator*{\essinf}{ess\,inf}
\newtheorem{theorem}{Theorem}[section]
\newtheorem{lemma}[theorem]{Lemma}
\newtheorem{proposition}[theorem]{Proposition}
\newtheorem{definition}[theorem]{Definition}
\newtheorem{remark}[theorem]{Remark}
\newtheorem{Assumptions}[theorem]{Assumption}
\def\theequation{\arabic{section}.\arabic{equation}}
\begin{document}
\title{\textbf{Identifying the Free Boundary of a \\ Stochastic, Irreversible Investment Problem\\ via the Bank-El Karoui Representation Theorem}\footnote{These results extend a portion of the second author Ph.D.\ dissertation \cite{tesimia} under the supervision of the first author. This paper has been presented by the second author at several conferences thanks to the financial support by the German Research Foundation (DFG) via grant Ri 1128-4-1.}}
\author{Maria B. Chiarolla\thanks{Dipartimento di Metodi e Modelli per l'Economia, il Territorio e la Finanza, Universit\`{a} di Roma `La Sapienza', via del Castro Laurenziano 9, 00161 Roma, Italy; \texttt{maria.chiarolla@uniroma1.it}}
\and Giorgio Ferrari\thanks{Corresponding author.\ Center for Mathematical Economics, Bielefeld University, Universitaetsstrasse 25, D-33615 Bielefeld, Germany; \texttt{giorgio.ferrari@uni-bielefeld.de}}}
\date{\today}
\maketitle

\vspace{0.5cm}

{\textbf{Abstract.}} We study a stochastic, continuous time model on a finite horizon for a firm that produces a single good. We model the production capacity as an It\^o diffusion controlled by a nondecreasing process representing the cumulative investment. The firm aims to maximize its expected total net profit by choosing the optimal investment process. That is a singular stochastic control problem.
We derive some first order conditions for optimality and we characterize the optimal solution in terms of the \textsl{base capacity} process $l^{*}(t)$, i.e.\ the unique solution of a representation problem in the spirit of Bank and El Karoui \cite{BankElKaroui}.
We show that the base capacity is deterministic and it is identified with the free boundary $\hat{y}(t)$ of the associated optimal stopping problem, when the coefficients of the controlled diffusion are deterministic functions of time. This is a novelty in the literature on finite horizon singular stochastic control problems. As a subproduct this result allows us to obtain an integral equation for the free boundary, which we explicitly solve in the infinite horizon case for a Cobb-Douglas production function and constant coefficients in the controlled capacity process.
\smallskip

{\textbf{Key words}}:
irreversible investment, singular stochastic control, optimal stopping, free boundary, Bank and El Karoui's Representation Theorem, base capacity.

\smallskip

{\textbf{MSC2010 subsject classification}}: 91B70, 93E20, 60G40, 60H25.

\smallskip

{\textbf{JEL classification}}: C02, E22, D92, G31.

\section{Introduction}
\label{introduction}

We study a continuous time, singular stochastic investment problem on a finite horizon $T$ for a firm that produces a single good. The setting is as in Chiarolla and Haussmann \cite{Chiarolla2} but without leisure, wages and scrap value. The capacity process is a diffusion controlled by a nondecreasing process $\nu(t)$ representing the cumulative investment, i.e.
\begin{equation*}
\label{capacityintr}
\left\{
\begin{array}{ll}
dC^{y,\nu}(t)= C^{y,\nu}(t)[ -\mu_C(t) dt + \sigma_C(t) dW(t)] + f_C(t)d\nu(t),\,\,\,\,\,\,\,t\in[0,T), \\ \\
C^{y,\nu}(0)=y > 0.
\end{array}
\right.
\end{equation*}
The optimal investment problem is
\begin{equation}
\label{netprofitintro}
\sup_{\nu}\mathbb{E}\bigg\{\int_0^{T} e^{-\int_0^t \mu_F(s) ds}\,R(C^{y,\nu}(t))dt - \int_{[0,T)} e^{-\int_0^t \mu_F(s)ds} d\nu(t) \bigg\}.
\end{equation}
In \cite{Chiarolla2} the Authors proved the existence of the optimal investment process $\hat{\nu}$. As expected, the optimal time to invest $\tau^{*}$ was the solution of the associated optimal stopping problem.
In particular, under constant coefficients and a Cobb-Douglas production function, they obtained a variational formulation for the optimal stopping problem, i.e.\ a free boundary problem.
In order to characterize the moving boundary $\hat{y}(t)$ through an integral equation, the Authors proved the left-continuity of $\hat{y}(t)$ and assumed its right-continuity (cf.\ \cite{Chiarolla2}, Assumption-[Cfb]) since continuity of the free boundary was needed to prove the smooth fit property.

In this paper, rather than trying to generalize the variational approach to the case of time-dependent coefficients, we introduce a new approach based on a stochastic generalization of the Kuhn-Tucker conditions and we identify the free boundary by exploiting the Bank-El Karoui Representation Theorem (cf.\ \cite{BankElKaroui}, Theorems $1$ and $3$). As a subproduct, we obtain an integral equation for the free boundary which does not require a priori continuity of the free boundary and the smooth fit property to be derived.

The Bank-El Karoui Representation Theorem allows to write an optional process $X=\{X(t), t \in [0,T]\}$ such that $X(T)=0$ as an optional projection of the form
\beq
\label{bankelkarouirepresentation}
X(t) = \mathbb{E}\bigg\{\int_{(t,T]} f\Big(s,\sup_{t \leq v < s}\xi(v)\Big)\,\mu(ds)\,\Big|\mathcal{F}_t\bigg\},\,\,\,\,\,\,\,\,\,\,t \in [0,T],
\eeq
where $f=f(t,\xi)$ is a prescribed function strictly decreasing in $\xi$, $\mu$ a nonnegative optional random measure and $\{\xi(t), t \in [0,T]\}$ is a progressively measurable process to be found.
It was shown in \cite{BankElKaroui} that the representation problem (\ref{bankelkarouirepresentation}) is closely linked to the solution of stochastic optimization problems as continuous time dynamic allocation problems with a limited amount of effort to spend on a fixed number of projects (e.g., cf.\ \cite{ElKarouiKaratzasGittins}), or the optimal consumption choice problem in a general semimartingale setting with Hindy-Huang-Kreps utility functional (cf. \cite{BankRiedel1}).

It turns out that the representation problem (\ref{bankelkarouirepresentation}) allows to handle problems in a framework not necessarily Markovian, as optimal control problems with a time-dependent, stochastic fuel constraint (cf.\ \cite{Bank}, \cite{CFR}), or irreversible investment problems (cf.\ \cite{RiedelSu}) with deterministic capacity and with profit rate influenced by a stochastic parameter process, not necessarily a diffusion. In \cite{RiedelSu} \textsl{invest just enough to keep the production capacity above a certain lower bound} (`the base capacity') was shown to be the optimal investment strategy.
Clearly such a policy acts like the optimal control of singular stochastic control problems as the original Monotone Follower Problem (e.g., cf.\ \cite{Karatzas81} and \cite{KaratzasShreve84}) or, more generally, irreversible investment problems (cf.\ \cite{KaratzasBaldursson}, \cite{Chiarolla2}, \cite{Kobila}, \cite{AOksendal} and \cite{Wang}, among others. See also the Introduction of \cite{Chiarolla4} and \cite{Chiarolla2} for an extensive review on such subject). Therefore the `base capacity' and the free boundary arising in singular stochastic control problems must be linked.
Here we aim to prove such a link by identifying the `base capacity' $l^{*}(t)$ of our irreversible investment problem.

We start by proving some first order conditions for optimality. Then we obtain $l^{*}(t)$ as the unique solution of a representation problem \`{a} la Bank and El Karoui \cite{BankElKaroui} and we characterize the optimal solution of the investment problem in terms of $l^{*}(t)$ by the first order conditions for optimality.
In particular, we prove that the `base capacity' $l^{*}(t)$ is deterministic and it coincides with the free boundary $\hat{y}(t)$ of the original irreversible investment problem when the coefficients of the controlled diffusion and the manager's discount factor are deterministic. This is a novelty in the literature on finite horizon singular stochastic control problems. To the best of our knowledge, the connection between the optional solution of the Bank-El Karoui representation problem and the free boundary of optimal stopping problems has not received significant attention so far. In the infinite horizon case, a somehow related paper is \cite{BankBaumgarten}, in which the Authors study the optimal stopping problem of a one-dimensional diffusion $X$ when the reward function $u(x)-k$ depends on the parameter $k$. They construct a function $\gamma(x)$ (which they call a \textsl{universal stopping signal}) such that for each $k$ the optimal stopping region $\Gamma_k$ may be written as $\Gamma_k:=\{x: \gamma(x) \geq k\}$. Then, in the case of constant discount rate and under some additional uniform integrability assumptions on $X$, they prove that $\gamma(X(t))$ solves a suitable representation problem \`{a} la Bank-El Karoui. Since the optimal stopping region is written in terms of $\gamma$, one would expect the signal function $\gamma$ to be related in some way to the locus of single points representing each the free boundary of the $k$-th problem, however such relation is not clear nor explicit. In our case we deal with a finite time horizon problem and our setting is quite different from that of \cite{BankBaumgarten}. Under Markovian assumptions the solution of the Bank-El Karoui representation problem is deterministic and it coincides exactly with the free boundary of our singular control problem (\ref{netprofitintro}).
As a subproduct the representation problem for $l^{*}(t)$ provides an integral equation for the free boundary, which, once discretized, might be solved numerically by backward induction.

Notice that when $T=+\infty$ we are able to find the explicit form of the free boundary which we show to coincide with that obtained in \cite{Pham} by H. Pham via a viscosity solution approach.

The paper is organized as follows. In Section \ref{firmproblem} we introduce the optimal investment problem, whereas in Section \ref{FOCoptimality} we derive the first order conditions for optimality. In Section \ref{findingoptimalcapacity} we find the optimal production capacity. In Section \ref{FiniteHor}, under restrictions on the coefficients of the controlled diffusion, we show that $l^{*}(t)$ is deterministic and coincides with $\hat{y}(t)$. Section \ref{CDinfinitehorizon} is devoted to the analysis of the Cobb-Douglas case with infinite time horizon. In the Appendix we recall the variational approach of Chiarolla and Haussmann \cite{Chiarolla2} and we generalize some of their results to the case of deterministic, time-dependent coefficients. Such results are needed in Section  \ref{FiniteHor}.


\section{The Firm's Investment Problem}
\label{firmproblem}

The setting is as in Chiarolla and Haussmann \cite{Chiarolla2} but without leisure, wages and scrap value. We briefly recall their notation. An economy with finite horizon $T$ and productive sector represented by a firm is considered on a complete probability space $(\Omega, \mathcal{F},\mathbb{P})$ with filtration $\{\mathcal{F}_t, t \in [0,T]\}$. Such a filtration is the usual augmentation of the filtration generated by an exogeneous Brownian motion $\{W(t), t \in [0,T]\}$ and augmented by $\mathbb{P}$-null sets.
The firm produces at rate $R(C)$ when its capacity is $C$. The cumulative, irreversible investment up to time $t$ is denoted by $\nu(t)$. It is an a.s.\ finite, left-continuous with right-limits, nondecreasing, and adapted process. The irreversibility of investment is expressed by the nondecreasing nature of $\nu$. The production capacity $C^{y,\nu}$ associated to the investment strategy $\nu$ satisfies
\beq
\label{capacity}
\left\{
\begin{array}{ll}
dC^{y,\nu}(t)= C^{y,\nu}(t)[ -\mu_C(t) dt + \sigma_C(t) dW(t)] + f_C(t)d\nu(t),\quad\,\, t\in[0,T), \\ \\
C^{y,\nu}(0)=y > 0,
\end{array}
\right.
\eeq
where $\mu_C$, $\sigma_C$ and $f_C$ are given progressively measurable processes, uniformly bounded in $(\omega,t)$. Moreover $f_C$ is continuous with $0<k_f \leq f_C(t) \leq \kappa_f$ and $\mu_C \geq 0$.
Here $f_C$ is a conversion factor since any unit of investment is converted into $f_C$ units of production capacity.

By setting
\beq
\label{nubarradefinizione}
C^{0}(t) :=C^{1,0}(t),\,\,\,\,\,\,\,\,\,\,\,\,\,\,
\overline{\nu}(t) := \int_{[0,t)} \frac{f_C(s)}{C^{0}(s)}d\nu(s),
\eeq
we may write
\beq
\label{GBM}
C^{0}(t)=e^{-\int_0^t\mu_C(s)ds}\mathcal{M}_0(t),
\eeq
where the exponential martingale
\beq
\label{expmg}
\mathcal{M}_s(t):=e^{-\int_s^t\frac{1}{2}\sigma_C^2(u)du + \int_s^t \sigma_C(u)dW(u)},\,\,\,\,t \in [s,T],
\eeq
is defined for $s \in [0,T]$.
Without investment, $C^{0}$ represents the decay of a unit of initial capital and we have
\beq
\label{solutioncapacity}
C^{y,\nu}(t) = C^{0}(t)[y + \overline{\nu}(t)].
\eeq

The production function of the firm is a nonnegative, measurable function $R(C)$. We make the following
\begin{Assumptions}
\label{AssProfit}
the mapping $C \mapsto R(C)$ is strictly increasing and strictly concave with continuous derivative $R_{c}(C):=\frac{\partial}{\partial C}R(C)$ satisfying the Inada conditions $$\lim_{C \rightarrow 0}R_{c}(C)= \infty,\,\,\,\,\,\,\,\,\,\,\,\,\,\,\,\lim_{C \rightarrow \infty}R_{c}(C)= 0.$$
\end{Assumptions}
\noindent Our Assumption \ref{AssProfit} is not as general as the Assumption in \cite{Chiarolla2} but it is needed to apply the Bank-El Karoui Representation Theorem \cite{BankElKaroui}.

Each investment plan $\nu \in \mathcal{S}_o$ leads to the expected total profit net of investment
\beq
\label{netprofit}
\mathcal{J}_{0,y}(\nu)=\mathbb{E}\bigg\{\int_0^{T} e^{-\int_0^t \mu_F(s) ds}\,R(C^{y,\nu}(t))dt - \int_{[0,T)} e^{-\int_0^t \mu_F(s)ds} d\nu(t) \bigg\}
\eeq
where
\begin{eqnarray*}
\mathcal{S}_o\hspace{-0.25cm}&:=&\hspace{-0.25cm}\{\nu:\Omega \times [0,T] \mapsto  \mathbb{R}_{+}\,\,\mbox{nondecreasing,\,\,left-continuous,\,\,adapted\,\,s.t.}\,\, \nu(0)=0,\,\,\mathbb{P}\mbox{-a.s.}\}
\end{eqnarray*}
is the convex set of irreversible investment processes. Here $\mu_F$ is the firm's manager discount factor; it is a nonnegative, progressively measurable process, uniformly bounded in $(\omega,t)$.

\noindent The firm's problem is then
\beq
\label{optimalproblem}
V(0,y):=\sup_{\nu \in \mathcal{S}_o}\mathcal{J}_{0,y}(\nu),
\eeq
with $V$ finite thanks to Assumption \ref{AssProfit} (cf.\ \cite{Chiarolla2}, Proposition $2.1$).
Moreover, the strict concavity of $R$ and the affine nature of $C^{y,\nu}$ in $\nu$ imply that $\mathcal{J}_{0,y}(\nu)$ is strictly concave on $\mathcal{S}_o$. Hence if a solution $\hat{\nu}$ of (\ref{optimalproblem}) exists, it is unique.
The existence of the solution has been proved in \cite{Chiarolla2}, Theorem $3.1$. We provide a new characterization of it in Theorem \ref{ottimasol} below.


\section{First Order Conditions for Optimality}
\label{FOCoptimality}

As in \cite{BankRiedel1}, \cite{Bank}, \cite{RiedelSu} and \cite{Steg}, among others, we now aim to characterize the optimal solution of (\ref{optimalproblem}) by some first order conditions for optimality.

Let $\mathcal{T}$ denote the set of all stopping times with value in $[0,T]$, $\mathbb{P}$-a.s.
Note that the strictly concave functional $\mathcal{J}_{0,y}(\nu)$ admits the supergradient
\begin{eqnarray}
\label{gradient}
\nabla_{\nu}\mathcal{J}_{0,y}(\nu)(\tau)\hspace{-0.25cm}&:=&\hspace{-0.25cm}\mathbb{E}\bigg\{\,\int_{\tau}^T e^{-\int_0^s \mu_{F}(u) du} C^{0}(s)\frac{f_C(\tau)}{C^{0}(\tau)}\,R_{c}(C^{y,\nu}(s))\,ds\,\Big|\,\mathcal{F}_{\tau}\,\bigg\}  \\
&&\hspace{1.5cm} -\, e^{-\int_0^{\tau}\mu_F(u) du}\,\mathds{1}_{\{\tau< T\}}, \nonumber
\end{eqnarray}
for $\tau \in \mathcal{T}$.

\begin{remark}
The quantity $\nabla_{\nu}\mathcal{J}_{0,y}(\nu)(t)$ may be interpreted as the net marginal expected future profit resulting from an additional infinitesimal investment at time $t$.
Mathematically, $\nabla_{\nu}\mathcal{J}_{0,y}(\nu)$ can be viewed as the Riesz representation of the profit's gradient at $\nu$.
More precisely, we may define $\nabla_{\nu}\mathcal{J}_{0,y}(\nu)$ as the optional projection of the product-measurable process
\beq
\label{phiopzionale}
\phi(t):= \int_{t}^T e^{-\int_0^s \mu_F(u) du}C^{0}(s)\frac{f_C(t)}{C^{0}(t)}\,R_{c}(C^{y,\nu}(s))\,ds - e^{-\int_0^t \mu_F(u)du}\mathds{1}_{\{t< T\}},
\eeq
for $t \in [0,T]$.
Hence $\nabla_{\nu}\mathcal{J}_{0,y}(\nu)$ is uniquely determined up to $\mathbb{P}$-indistinguishability and it holds
\beq
\label{Jacodbrachetto}
\mathbb{E}\bigg\{\,\int_{[0,T)} \nabla_{\nu}\mathcal{J}_{0,y}(\nu)(t)d\nu(t)\bigg\} = \mathbb{E}\bigg\{\,\int_{[0,T)} \phi(t) d\nu(t)\bigg\}
\eeq
for all $\nu \in \mathcal{S}_o$ (cf.\ Theorem 1.33 in \cite{Jacod}).
\end{remark}
We shall prove that
\begin{theorem}
\label{optimalFOC}
Given problem (\ref{optimalproblem}), $\hat{\nu}(t)$ is optimal if and only if the following first-order conditions
\beq
\label{FOCoptimal1}
\nabla_{\nu}\mathcal{J}_{0,y}(\hat{\nu})(\tau)\leq 0,\,\,\,\,\forall \tau \in \mathcal{T},\,\,\,\mathbb{P}\mbox{-a.s.},
\eeq
\beq
\label{FOCoptimal2}
\mathbb{E}\bigg\{\int_{[0,T)}\nabla_{\nu}\mathcal{J}_{0,y}(\hat{\nu})(t)\,d\hat{\nu}(t)\bigg\} = 0,
\eeq
hold true.
\end{theorem}

\begin{proof}
We may start by proving the sufficient part. Let $\hat{\nu}$ satisfy the first-order conditions (\ref{FOCoptimal1}) and (\ref{FOCoptimal2}) and let $\nu \in \mathcal{S}_o$.
Then it follows from (\ref{solutioncapacity}) that
$$C^{y,\hat{\nu}}(t) - C^{y,\nu}(t) = \int_{[0,t)} C^{0}(t)\frac{f_C(s)}{C^{0}(s)}(d\hat{\nu}(s) -d\nu(s)).$$
Hence concavity of $R$ implies
\begin{eqnarray}
&& \mathcal{J}_{0,y}(\hat{\nu}) - \mathcal{J}_{0,y}(\nu) \nonumber \\
&& = \mathbb{E}\bigg\{\,\int_0^T e^{- \int_0^t \mu_F(u) du} \Big[R(C^{y,\hat{\nu}}(t)) - R(C^{y,\nu}(t))\Big]dt\, - \int_{[0,T)} e^{- \int_0^t \mu_F(u) du} (d\hat{\nu}(t) - d\nu(t))\bigg\}  \nonumber 
\end{eqnarray}
\begin{eqnarray}
\label{Sufficiency1}
&& = \mathbb{E}\bigg\{\,\int_0^T e^{- \int_0^t \mu_F(u) du} \Big[R(C^{y,\hat{\nu}}(t)) - R(C^{y,\nu}(t))\Big]dt\, - \int_{[0,T)} e^{- \int_0^t \mu_F(u) du} (d\hat{\nu}(t) - d\nu(t))\bigg\}  \nonumber \\
&& \geq \mathbb{E}\bigg\{\,\int_0^T e^{- \int_0^t \mu_F(u) du}\, R_c(C^{y,\hat{\nu}}(t)) (C^{y,\hat{\nu}}(t) - C^{y,\nu}(t))dt\, - \int_{[0,T)} e^{- \int_0^t \mu_F(u) du} (d\hat{\nu}(t) - d\nu(t))\bigg\} \nonumber \\
&& = \mathbb{E}\bigg\{\,\int_0^T e^{- \int_0^t \mu_F(u) du}\, R_c(C^{y,\hat{\nu}}(t)) \int_{[0,t)} C^{0}(t)\frac{f_C(s)}{C^{0}(s)} (d\hat{\nu}(s) -d\nu(s)) dt\,  \nonumber \\
&&\hspace{1.2cm} - \int_{[0,T)} e^{- \int_0^t \mu_F(u) du} (d\hat{\nu}(t) -d\nu(t))\bigg\}  \nonumber \\
&&= \mathbb{E}\bigg\{\,\int_{[0,T)} \bigg [ \int_t^T e^{- \int_0^s \mu_F(u) du} \,R_c(C^{y,\hat{\nu}}(s))\, C^{0}(s)\frac{f_C(t)}{C^{0}(t)}ds - e^{- \int_0^t \mu_F(u) du} \bigg] (d\hat{\nu}(t) -d\nu(t)) \bigg\} \nonumber \\
&&= \mathbb{E}\bigg\{\,\int_{[0,T)} \nabla_{\nu}\mathcal{J}_{0,y}(\hat{\nu})(t)\,(d\hat{\nu}(t) -d\nu(t)) \bigg\} \geq 0, \nonumber
\end{eqnarray}
where we have used Fubini's theorem in the third equality, and (\ref{Jacodbrachetto}), (\ref{FOCoptimal1}) and (\ref{FOCoptimal2}) in the last one. It follows that $\hat{\nu}$ is optimal for problem (\ref{optimalproblem}).

Necessity may be derived from \cite{Steg}, Proposition $3.2$, with $k(t):=f_C^{-1}(t)e^{-\int_0^t \mu_F(u) du}C^0(t)$ and $F(\omega,t,q):=e^{-\int_0^t \mu_F(\omega,u) du}R(qC^{0}(\omega,t))$, $\omega \in \Omega, t \in [0,T], q > 0$.
Although we do not have that the optional cost process $k$ is a supermartingale, however it is nonnegative and this is all that is needed in \cite{Steg}, proof of Proposition $3.2$, to get necessity of (\ref{FOCoptimal1}) and (\ref{FOCoptimal2}).
\end{proof}
\noindent Theorem (\ref{optimalFOC}) characterizes the optimal investment plan but it might not be useful if one aims to find the explicit solution, since the first order conditions are not always binding.

In what follows we construct the optimal capacity in terms of the `\textsl{base capacity}' $\{l^{*}(t), t \in [0,T]\}$ (cf.\ also \cite{RiedelSu}, Definition $3.1$) which represents the capacity level that is optimal for a firm starting at time $t$ with capacity zero. We show that it is optimal for (\ref{optimalproblem}) to invest up to the base capacity level if the current capacity level is below it; otherwise no investment is optimal. Mathematically, $l^{*}$ is the solution of the Bank-El Karoui representation problem \cite{BankElKaroui}.


\section{Finding the Optimal Capacity Process}
\label{findingoptimalcapacity}

The Bank-El Karoui Representation Theorem (cf.\ \cite{BankElKaroui}, Theorem 3 and Remark 2.1) states that, given
\begin{itemize}
 \item an optional process $X=\{X(t), t \in [0,T]\}$ of class (D), lower-semicontinuous in expectation with $X(T)=0$,
  \item a nonnegative optional random Borel measure $\mu(\omega,dt)$,
	\item $f(\omega,t,x): \Omega \times [0,T] \times \mathbb{R} \mapsto \mathbb{R}$ such that $f(\omega, t, \cdot): \mathbb{R} \mapsto \mathbb{R}$ is continuous, strictly decreasing from $+\infty$ to $-\infty$, and the stochastic process $f(\cdot, \cdot,x): \Omega \times [0,T] \mapsto \mathbb{R}$ is progressively measurable and integrable with respect to $d\mathbb{P} \otimes \mu(\omega,dt)$,
\end{itemize}
then there exists an optional process $\xi = \{\xi(t), t \in [0,T]\}$ taking values in $\mathbb{R} \cup \{-\infty\}$ such that for all $\tau \in \mathcal{T}$,
$$f(t,\sup_{\tau \leq u < t}\xi(u))\mathds{1}_{(\tau, T]}(t) \in \textbf{L}^1\left(d\mathbb{P}\otimes \mu(\omega,dt)\right)$$ and
\beq
\label{backwardgenerica}
\mathbb{E}\bigg\{\,\int_{(\tau, T]} \,f(s, \sup_{\tau \leq u < s} \xi(u))\,\mu(ds)\,\Big|\,\mathcal{F}_{\tau}\,\bigg\}= X(\tau).
\eeq
In \cite{BankElKaroui}, Lemma $4.1$ (see also \cite{BankFollmer}, Remark $1.4$-(ii)), a real valued process $\xi$ is considered upper right-continuous\footnote{Notice that usually the limit superior is defined as $$\limsup_{s \searrow t} \xi(s) := \lim_{\epsilon \downarrow 0} \sup_{s \in (t, (t+\epsilon) \wedge T)} \xi(s)$$ (see for example \cite{Yeh}, Chapter $3$, p.\ 250). Instead the definition in (\ref{urc}) is commonly referred to as the upper envelope $\hat{\xi}$ of $\xi$ and it is such that $\hat{\xi}(t) \geq \lim_{\epsilon \downarrow 0} \sup_{s \in (t, (t+\epsilon) \wedge T)} \xi(s)$ (see also \cite{Yeh}, Chapter $2$, Definition $7.24$ and Observation $7.25$, among others). In this paper we follow the point of view of the literature on the Bank-El Karoui Representation Theorem and its applications (like \cite{BankFollmer}, \cite{BankElKaroui} and \cite{BankKuchler}, among others) and hence we base our results on (\ref{urc}).} on $[0,T)$ in the sense of the French `Bourbakist' school (cf.,\ e.g., \cite{Bourbaki}, IV.24,  or \cite{Moreau}, Remark $3$ at pp.\ 29-30); that is, if, for each $t$, $\xi(t) = \limsup_{s \searrow t} \xi(s)$ with
\beq
\label{urc}
\limsup_{s \searrow t} \xi(s) := \lim_{\epsilon \downarrow 0} \sup_{s \in [t, (t+\epsilon) \wedge T]} \xi(s).
\eeq
Then, by \cite{BankElKaroui}, Theorem $1$, any progressively measurable, upper right-continuous solution $\xi$ to (\ref{backwardgenerica}) is uniquely determined up to optional sections on $[0,T)$ in the sense that
$$\xi(\tau) = \essinf_{\tau < \sigma \leq T}\Xi_{\tau,\sigma}, \qquad \tau \in [0,T),$$
where $\Xi_{\tau,\sigma}$ is the unique (up to a $\mathbb{P}$-null set) $\mathcal{F}_{\tau}$-measurable random variable satisfying
$$\mathbb{E}\{X(\tau) - X(\sigma) | \mathcal{F}_{\tau}\} = \mathbb{E}\bigg\{\int_{(\tau, \sigma]} f(t,\Xi_{\tau,\sigma})\,\mu(dt) \Big| \mathcal{F}_{\tau}\bigg\}.$$

\begin{lemma}
\label{existencexi}
There exists a unique optional, upper right-continuous, positive process $l^{*}(t)$ that solves
\begin{eqnarray}
\label{representationproblem}
&&\mathbb{E}\bigg\{\,\int_{\tau}^T e^{- \int_0^s \mu_F(u) du} C^{0}(s)\,R_{c}\bigg(C^{0}(s) \sup_{\tau \leq u < s}\bigg (\frac{l^{*}(u)}{C^{0}(u)}\bigg )\bigg)\,ds\,\Big|\,\mathcal{F}_{\tau}\,\bigg\} \nonumber \\
&&  = e^{-\int_0^{\tau}\mu_F(u)du}\frac{C^{0}(\tau)}{f_C(\tau)}\mathds{1}_{\{\tau< T\}}
\end{eqnarray}
for all $\tau \in \mathcal{T}$.
\end{lemma}
\begin{proof}

We apply the Bank-El Karoui Representation Theorem to
\beq
\label{identification}
X(\omega,t):= e^{-\int_0^{t}\mu_F(\omega,u)du}\frac{C^{0}(\omega,t)}{f_C(\omega,t)} \mathds{1}_{[0,T)}(t),\,\,\,\,\,\,\,\,\,\,\,\,\,\,
\mu(\omega,dt):= e^{- \int_0^t \mu_F(\omega,u) du}\,C^{0}(\omega,t)dt
\eeq
and
\beq
\label{identificationf}
f(\omega,t,x):=
\left\{
\begin{array}{ll}
\displaystyle R_c\left(-\frac{C^{0}(\omega,t)}{x}\right),\,\,\,\,\,\mbox{for}\,\,x<0,\\ \\
\displaystyle -x\,,\,\,\,\,\,\,\,\,\,\,\,\,\,\,\,\,\,\,\,\,\,\,\,\,\,\,\,\,\,\,\,\,\,\,\,\,\,\,\,\,\,\mbox{for}\,\,x\geq 0,
\end{array}
\right.
\eeq
and define
\begin{equation}
\label{Yxi1}
\Gamma^{\xi}(t):= \essinf_{t \leq \tau \leq T}\mathbb{E}\bigg\{\,\int_{t}^{\tau} f(u,\xi) \mu(du) + X(\tau)\,\Big|\mathcal{F}_{t}\bigg\},\qquad \xi \in \mathbb{R},\,\,\,t \in [0,T].
\end{equation}
Recall that $\Gamma^{\xi}$ of (\ref{Yxi1}) may be taken to be right-continuous and it is such that the mapping $\xi \mapsto \Gamma^{\xi}(\omega,t)$ is continuous and nonincreasing for $\omega \in \Omega$, $t \in [0,T]$ (cf.\ \cite{BankElKaroui}, Lemma $4.12$).

Then, the optional process (cf.\ \cite{BankElKaroui}, eq.\ (23) and Lemma 4.13)
\beq
\label{defxi1}
\xi^{*}(t):= \sup\Big\{ \xi \in \mathbb{R} : \Gamma^{\xi}(t) = X(t)\Big\},\,\,\,\,\,\,\,t \in [0,T),
\eeq
solves the representation problem
\beq
\label{representationproblem0}
e^{-\int_0^{\tau}\mu_F(u)du}\frac{C^{0}(\tau)}{f_C(\tau)}\mathds{1}_{\{\tau< T\}} = \mathbb{E}\bigg\{\,\int_{\tau}^T f(s,\sup_{\tau \leq u < s} \xi^{*}(u))\,\mu(ds)\,\Big|\,\mathcal{F}_{\tau}\,\bigg\}.
\eeq
We now claim (and we prove it below) that $\xi^{*}$ is upper right-continuous and a.s.\ strictly negative on $[0,T)$. Then, the upper right-continuous, strictly positive process
\beq
\label{deflstar}
l^{*}(t):= -\frac{C^{0}(t)}{\xi^{*}(t)}
\eeq
solves by (\ref{representationproblem0})
\begin{eqnarray*}
\label{representationproblem2}
e^{-\int_0^{\tau}\mu_F(u)du}\frac{C^{0}(\tau)}{f_C(\tau)} \mathds{1}_{\{\tau< T\}}& \hspace{-0.25cm} = \hspace{-0.25cm}&\mathbb{E}\bigg\{\,\int_{\tau}^T e^{- \int_0^s \mu_F(u) du} C^{0}(s)\,R_{c}\bigg(\frac{C^{0}(s)}{-\sup_{\tau \leq u < s}( - \frac{C^{0}(u)}{l^{*}(u)})}\bigg)\,ds\,\Big|\,\mathcal{F}_{\tau}\,\bigg\}\nonumber \\
&\hspace{-0.25cm} = \hspace{-0.25cm}& \mathbb{E}\bigg\{\,\int_{\tau}^T e^{- \int_0^s \mu_F(u) du} C^{0}(s)\,R_{c}\bigg(\frac{C^{0}(s)}{\inf_{\tau \leq u < s} (\frac{C^{0}(u)}{l^{*}(u)})}\bigg)\,ds\,\Big|\,\mathcal{F}_{\tau}\,\bigg\}\\
&\hspace{-0.25cm} = \hspace{-0.25cm}&\mathbb{E}\bigg\{\,\int_{\tau}^T e^{- \int_0^s \mu_F(u) du} C^{0}(s)\,R_{c}\bigg(C^{0}(s)\sup_{\tau \leq u < s} \bigg(\frac{l^{*}(u)}{C^{0}(u)}\bigg)\bigg)\,ds\,\Big|\,\mathcal{F}_{\tau}\,\bigg\},\nonumber
\end{eqnarray*}
i.e.\ (\ref{representationproblem}). Moreover, $\xi^{*}$ (and hence $l^{*}$) is unique up to optional sections by \cite{BankElKaroui}, Theorem $1$, as it is optional and upper right-continuous. Therefore it is unique up to indistinguishability by Meyer's optional section theorem (see, e.g., \cite{DellMeyer}, Theorem IV.86).

To complete the proof we must show that $\xi^{*}$ is indeed upper right-continuous and a.s.\ negative on $[0,T)$.
We start by proving its upper right-continuity. To accomplish that we only need to prove that $\xi^{*}$ has upper semi right-continuous sample paths, i.e.
\beq
\label{usrc}
\limsup_{s \searrow t}\xi^{*}(s) \leq \xi^{*}(t),
\eeq
since $$\limsup_{s \searrow t}\xi^{*}(s) \geq \xi^{*}(t)$$ by definition (cf.\ (\ref{urc})).
Thanks to \cite{DellLeng}, Proposition $2$, it suffices to show (cf.\ also \cite{BankKuchler}, proof of Theorem $1$)
\beq
\label{DellacherieLenglstat}
\lim_{n \rightarrow \infty} \xi^{*}(\tau_n) \leq \xi^{*}(\tau),
\eeq
for any $\tau \in [0,T)$ and any sequence of stopping times $\{\tau_n\}_{n \in \mathbb{N}}$ such that $\tau_n \downarrow \tau$ and for which there exists $\zeta:=\lim_{n \rightarrow \infty}\xi^{*}(\tau_n)$ a.s.
Then, fix $\epsilon > 0$ and use (\ref{defxi1}), right-continuity of $t \mapsto \Gamma^{\xi}(t)$ and continuity of $\xi \mapsto \Gamma^{\xi}(t)$ (see \cite{BankElKaroui}, Lemma $4.12$) to write
\beq
\label{secondofDL}
\Gamma^{\zeta - \epsilon}(\tau) = \lim_{n \rightarrow \infty} \Gamma^{\zeta - \epsilon}(\tau_n) = \lim_{n \rightarrow \infty} X(\tau_n) = X(\tau) = \Gamma^{\xi^{*}(\tau)}(\tau).
\eeq
It thus follows that $\zeta - \epsilon \leq \xi^{*}(\tau)$ for any $\epsilon > 0$, which implies (\ref{DellacherieLenglstat}); i.e., $\xi^{*}$ is upper right-continuous.

Finally, to prove that $\xi^{*}(t) < 0$ a.s.\ on $[0,T)$ define
$$\sigma := \inf \{t \in [0,T) : \xi^{*}(t) \geq 0 \} \wedge T,$$
then for $\omega \in \{\sigma < T\}$, upper right-continuity of $\xi^{*}$ implies $\xi^{*}(\sigma)\geq 0$ and therefore $\sup_{\sigma \leq u < s}\xi^{*}(u) \geq 0$ for all $s\in (\sigma, T]$. Hence, (\ref{representationproblem0}) with $\tau = \sigma$, i.e.
\beq
\label{xipos}
e^{-\int_0^{\sigma}\mu_F(u)du}\frac{C^{0}(\sigma)}{f_C(\sigma)}\mathds{1}_{\{\sigma< T\}} = - \mathbb{E}\bigg\{\,\int_{\sigma}^T e^{- \int_0^s \mu_F(u) du} C^{0}(s)\,\sup_{\sigma \leq u < s}\xi^{*}(u)\,ds\,\Big|\,\mathcal{F}_{\sigma}\,\bigg\},
\eeq
is not possible for $\omega \in \{\sigma < T\}$ since the right-hand side of (\ref{xipos}) is nonpositive, whereas the left-hand side is always strictly positive. It follows that $\sigma=T$ a.s.\ and hence $\xi^{*}(t) < 0$ for all $t \in [0,T)$ a.s.
\end{proof}

Notice that $l^{*}(t)$ may be found numerically by backward induction on a discretized version of problem (\ref{representationproblem}) (see \cite{BankFollmer}, Section $4$).
In some cases, when $T=+\infty$, (\ref{representationproblem}) has a closed form solution as in the case of a Cobb-Douglas production function (see Section \ref{CDinfinitehorizon} below).

We are now able to find the unique optimally controlled capacity plan for problem (\ref{optimalproblem}).

\begin{definition}
\label{basecapacity}
For a given positive process $l$, the capacity process that tracks $l$ is defined as
\beq
\label{Coptimal}
C^{(l)}(t): = C^{0}(t)\bigg( y \vee \sup_{0 \leq u < t}\bigg (\frac{l(u)}{C^{0}(u)}\bigg ) \bigg).
\eeq
\end{definition}

\begin{theorem}
\label{ottimasol}
Let $l^{*}(t)$ be the unique optional, upper right-continuous, positive solution of (\ref{representationproblem}) and let $C^{(l^{*})}$ be the capacity process that tracks $l^{*}$. Then the investment plan $\nu^{(l^{*})}$ that finances $C^{(l^{*})}$, i.e. $$d\nu^{(l^{*})}(t)= \frac{1}{f_C(t)}\,C^{(l^{*})}(t)[ \mu_C(t) dt - \sigma_C(t) dW(t)] + \frac{1}{f_C(t)}\,dC^{(l^{*})}(t),\,\,\,\,\,\,\mbox{with}\,\,\,\,\,\,\nu^{(l^{*})}(0)=0,$$ is optimal for the firm's problem (\ref{optimalproblem}).
\end{theorem}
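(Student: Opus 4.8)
The plan is to identify $\nu^{(l^*)}$ with the optimizer $\hat\nu$ by verifying the sufficient first-order conditions (\ref{FOCoptimal1})--(\ref{FOCoptimal2}) of Theorem \ref{optimalFOC} for the candidate $\hat\nu=\nu^{(l^*)}$. First I would record that $\nu^{(l^*)}$ is admissible and actually finances the tracking capacity. Writing $M(t):=y\vee\sup_{0\leq u\leq t}(l^*(u)/C^0(u))$, so that $C^{(l^*)}(t)=C^0(t)M(t)$ by (\ref{Coptimal}), the product rule together with $dC^0(t)=C^0(t)[-\mu_C(t)\,dt+\sigma_C(t)\,dW(t)]$ and the finite-variation nature of $M$ gives, after matching with the capacity dynamics (\ref{capacity}), the identity $f_C(t)\,d\nu^{(l^*)}(t)=C^0(t)\,dM(t)$, i.e. $d\nu^{(l^*)}(t)=\frac{C^0(t)}{f_C(t)}\,dM(t)\geq 0$. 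Thus $\nu^{(l^*)}\in\mathcal{S}_0$, its paths increase only on the (optional) increase set of $M$, and $C^{y,\nu^{(l^*)}}=C^{(l^*)}$.

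Step 1 (condition (\ref{FOCoptimal1})). For any $\tau\in\mathcal{T}$ and $s\geq\tau$, (\ref{Coptimal}) yields the pathwise bound $C^{(l^*)}(s)=C^0(s)M(s)\geq C^0(s)\sup_{\tau\leq u\leq s}(l^*(u)/C^0(u))$, since the supremum over $[0,s]$ dominates the one over $[\tau,s]$. As $C^0(s)>0$ and $R_c$ is decreasing, $R_c(C^{(l^*)}(s))\leq R_c\big(C^0(s)\sup_{\tau\leq u\leq s}(l^*(u)/C^0(u))\big)$. Substituting this into the conditional expectation in the supergradient (\ref{gradient}) evaluated at $\nu^{(l^*)}$ and comparing with the representation identity (\ref{representationproblem}), the conditional expectation is bounded by $e^{-\int_0^{\tau}\mu_F(u)du}\frac{C^0(\tau)}{f_C(\tau)}\mathds{1}_{\{\tau<T\}}$; multiplying by $f_C(\tau)/C^0(\tau)$ and subtracting $e^{-\int_0^{\tau}\mu_F(u)du}\mathds{1}_{\{\tau<T\}}$ gives $\nabla_{\nu}\mathcal{J}_{0,y}(\nu^{(l^*)})(\tau)\leq 0$. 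The case $\tau=T$ is trivial.

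Step 2 (condition (\ref{FOCoptimal2})). Here the point is that $d\nu^{(l^*)}$ charges only the increase set of $M$, and on it the inequality of Step 1 becomes an equality. Let $\tau\in\mathcal{T}$ have graph contained in this increase set, so that $M(\tau)=l^*(\tau)/C^0(\tau)\geq y$ on $\{\tau<T\}$. Then for every $s\in[\tau,T]$ one has $\sup_{0\leq u\leq s}(l^*(u)/C^0(u))=\sup_{\tau\leq u\leq s}(l^*(u)/C^0(u))$, because $u=\tau$ already lies in the second supremum; consequently $C^{(l^*)}(s)=C^0(s)\sup_{\tau\leq u\leq s}(l^*(u)/C^0(u))$ identically on $[\tau,T]$. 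Plugging this equality into (\ref{gradient}) and invoking (\ref{representationproblem}) gives $\nabla_{\nu}\mathcal{J}_{0,y}(\nu^{(l^*)})(\tau)=0$ for every such $\tau$. By the optional section theorem this forces the optional process $\nabla_{\nu}\mathcal{J}_{0,y}(\nu^{(l^*)})$ to vanish $d\nu^{(l^*)}\otimes d\mathbb{P}$-a.e., which is precisely (\ref{FOCoptimal2}). The conclusion then follows from the sufficiency part of Theorem \ref{optimalFOC}.

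The main obstacle is Step 2. Since the supergradient is an optional projection (a conditional expectation), one cannot argue pathwise with the progressively measurable process $\phi$ of (\ref{phiopzionale}); rather, the vanishing of $\nabla_{\nu}\mathcal{J}_{0,y}(\nu^{(l^*)})$ must be established along stopping times whose graphs lie in the random increase set of $M$ and then propagated to the supporting measure $d\nu^{(l^*)}$ by the section theorem. The delicate points are to verify measurably that $M(\tau)=l^*(\tau)/C^0(\tau)$ on the graph of such a $\tau$---which relies on the upper right-continuity of $l^*$ established in Proposition \ref{esistenzarappr}---and to justify replacing $C^{(l^*)}(s)$ by $C^0(s)\sup_{\tau\leq u\leq s}(l^*(u)/C^0(u))$ inside the conditional expectation, legitimate because the relevant event belongs to $\mathcal{F}_{\tau}$ and the equality holds simultaneously for all $s\geq\tau$.
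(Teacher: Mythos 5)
Your proposal is correct and follows essentially the same route as the paper's proof: verify the sufficient first-order conditions of Theorem \ref{optimalFOC} by bounding the supergradient via the monotonicity of $R_c$ and the representation identity (\ref{representationproblem}), with equality precisely at times of increase of the tracking capacity. Your additional care in Steps 0 and 2 (the product-rule check of admissibility and the optional section argument propagating the pointwise equality to the flat-off condition) merely fills in details the paper leaves implicit.
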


\begin{proof}
In order to prove that $C^{(l^{*})}(t)$ is the optimal capacity, we only have to show that $C^{(l^{*})}(t)$ solves the two first-order conditions of Theorem \ref{optimalFOC}. In fact, for all $\tau \in \mathcal{T}$
\begin{eqnarray}
\label{ottimalitaC}
\lefteqn{\mathbb{E}\bigg\{\,\int_{\tau}^T e^{- \int_0^s \mu_F(u) du} C^{0}(s)\,R_{c}\big(C^{(l^{*})}(s)\big)\,ds\,\Big|\,\mathcal{F}_{\tau}\,\bigg\}} \nonumber \\
&&=\mathbb{E}\bigg\{\,\int_{\tau}^T e^{- \int_0^s \mu_F(u) du} C^{0}(s)\,R_{c}\bigg(C^{0}(s)\bigg( y \vee \sup_{0 \leq u < s}\bigg (\frac{l^{*}(u)}{C^{0}(u)}\bigg )\bigg)\bigg)\,ds\,\Big|\,\mathcal{F}_{\tau}\,\bigg\} \\
&&\leq \mathbb{E}\bigg\{\,\int_{\tau}^T e^{- \int_0^s \mu_F(u) du} C^{0}(s)\,R_{c}\bigg(C^{0}(s) \sup_{\tau \leq u < s}\bigg (\frac{l^{*}(u)}{C^{0}(u)}\bigg )\bigg)\,ds\,\Big|\,\mathcal{F}_{\tau}\,\bigg\} \nonumber\\
&&= e^{-\int_0^{\tau}\mu_F(u)du}\frac{C^{0}(\tau)}{f_C(\tau)}\mathds{1}_{\{\tau< T\}}, \nonumber
\end{eqnarray}
where in the last step we have used (\ref{representationproblem}). Notice that in (\ref{ottimalitaC}) we have equality if $\tau$ is a time of investment; that is a time of strict increase for $C^{(l^{*})}$, i.e.\ $dC^{(l^{*})}(\tau) > 0$. In fact, at such time, we have $C^{(l^{*})}(t) = C^{0}(t) \sup_{\tau \leq u < t}(\frac{l^{*}(u)}{C^{0}(u)})$ for $t \in (\tau,T]$. Hence (\ref{FOCoptimal1}) and (\ref{FOCoptimal2}) hold (see also (\ref{gradient})) and so $\nu^{(l^{*})}(t) \equiv \hat{\nu}(t)$.
\end{proof}

\begin{remark}
\label{trovonubarrato}
Recall that $C^{y,\hat{\nu}}(t)=  C^{0}(t)[y + \overline{\nu}^{y}(t)]$ (cf.\ (\ref{solutioncapacity})) where $\overline{\nu}^{y}(t):= \int_{[0,t)}\frac{f_C(s)}{C^0(s)}d\hat{\nu}(s)$. Hence it follows from (\ref{Coptimal}) with $l=l^{*}$ that
\beq
\label{nustargeneric}
\overline{\nu}^{y}(t) = \sup_{0 \leq u < t}\bigg( y \vee \frac{l^{*}(u)}{C^{0}(u)}\bigg) - y.
\eeq
Therefore
\beq
\label{nustargeneric2}
\overline{\nu}^{y}(t) = \sup_{0 \leq u < t}\bigg(\frac{l^{*}(u) - yC^{0}(u)}{C^{0}(u)}\bigg) \vee 0.
\eeq
\end{remark}


\section{Identifying the Base Capacity Process}
\label{FiniteHor}

In this Section we find the explicit link between our `base capacity' approach and the variational approach in Chiarolla and Haussmann \cite{Chiarolla2} based on the shadow value of installed capital, $v:=\frac{\partial}{\partial y}V$, with $V$ as in (\ref{optimalproblem}) (see Appendix \ref{solutionInvestmentProblem} for a generalization of \cite{Chiarolla2} in the case of deterministic, time-dependent coefficients).

We make the following

\begin{Assumptions}
\label{detrminsticcoeff}
$\mu_C(t)$, $\sigma_C(t)$, $f_C(t)$ and $\mu_F(t)$ are deterministic functions of $t \in [0,T]$.
\end{Assumptions}

Recall that (cf.\ also proof of Lemma \ref{existencexi}), if
\begin{eqnarray}
\label{Yxi}
\Gamma^{\xi}(t)\hspace{-0.25cm}&:=&\hspace{-0.25cm} \essinf_{t \leq \tau \leq T}\mathbb{E}\bigg\{\,\int_{t}^{\tau} e^{-\int_0^u \mu_F(r) dr} C^{0}(u)\,R_{c}\bigg( - \frac{1}{\xi}C^{0}(u) \bigg)\,du\, \nonumber \\
&& \hspace{3.2cm} +\, e^{-\int_0^{\tau}\mu_F(r) dr}C^{0}(\tau)\frac{1}{f_C(\tau)}\mathds{1}_{\{\tau < T\}}\,\Big|\mathcal{F}_{t}\bigg\},
\end{eqnarray}
for $\xi < 0$ and $t \in [0,T]$, then \cite{BankElKaroui}, Lemma $4.12$ and Lemma $4.13$, guarantee that
\begin{itemize}
	\item the stopping time
\beq
\label{taustar}
\tau^{\xi}(t):= \inf\bigg\{s \in [t,T) : \Gamma^{\xi}(s) = e^{-\int_0^s \mu_F(r)dr}C^{0}(s)\frac{1}{f_C(s)} \bigg\} \wedge T
\eeq
is optimal for (\ref{Yxi});
 \item the optional, upper right-continuous process
 \beq
 \label{defxi}
 \xi^{*}(t):= \sup\bigg\{ \xi < 0 : \Gamma^{\xi}(t) = e^{-\int_0^t \mu_F(r)dr}C^{0}(t)\frac{1}{f_C(t)}\bigg\},\,\,\,\,\,\,\,t \in [0,T),
 \eeq
uniquely solves the representation problem (\ref{representationproblem0}).
\end{itemize}

We now make an absolutely continuous change of probability measure. In fact, define the probability measure $\widetilde{\mathbb{P}}$ by $\widetilde{\mathbb{P}}(A):=\mathbb{E}\left\{\mathcal{M}_0(T) \mathds{1}_{A}\right\}$, for $A \in \mathcal{F}_{T}$, with $\mathcal{M}_0(T)$ as in (\ref{expmg}). Then the Radon-Nikodym derivative is
\beq
\label{RADON}
\frac{d\widetilde{\mathbb{P}}}{d\mathbb{P}}\Big|_{\mathcal{F}_t}=\mathcal{M}_0(t), \qquad t \in [0,T],
\eeq
and the process $\widetilde{W}(t):= W(t) - \int_0^t \sigma_C(u)du$, $t \in [0,T]$, is a standard Brownian motion under $\widetilde{\mathbb{P}}$.
We denote by $\widetilde{\mathbb{E}}\left\{\cdot\right\}$ the expectation w.r.t.\ $\widetilde{\mathbb{P}}$.

Hence, under $\widetilde{\mathbb{P}}$, by the continuous time Bayes' rule (see e.g.\ \cite{KaratzasShreve}) the process $e^{\int_0^t \mu_F(r)dr}\,\frac{\Gamma^{\xi}(t)}{C^{0}(t)}$ may be written as
\beq
\label{Ytilde}
\widetilde{\Gamma}^{\xi}(t):= \essinf_{t \leq \tau \leq T}\widetilde{\mathbb{E}}\bigg\{\,\int_{t}^{\tau} e^{-\int_{t}^u\overline{\mu}(r)dr}\,R_{c}\bigg( - \frac{1}{\xi}C^{0}(u) \bigg)\,du\, + e^{-\int_{t}^{\tau}\overline{\mu}(r)dr}\frac{1}{f_C(\tau)}\mathds{1}_{\{\tau < T\}}\Big|\,\mathcal{F}_{t}\,\bigg\},
\eeq
with $\overline{\mu}(t):=\mu_C(t) + \mu_F(t)$, and thus the optional process $\xi^{*}(t)$ of (\ref{defxi}) is
\beq
\label{taustar2}
\xi^{*}(t)= \sup\bigg\{\xi < 0 : \widetilde{\Gamma}^{\xi}(t) = \frac{1}{f_C(t)} \bigg\}, \quad t \in [0,T).
\eeq

For an appropriate value of $\xi$, we are now able to link $\widetilde{\Gamma}^{\xi}(t)$ to $v(t,y)$, the shadow value of installed capital defined in Appendix \ref{solutionInvestmentProblem} (cf.\ (\ref{vtyptilde})).
\begin{proposition}
\label{propYtildevty}
With $\widetilde{\Gamma}^{\xi}(t)$ as in (\ref{Ytilde}), $Y^{t,z}(u)=z\frac{C^0(u)}{C^0(t)} = z\widetilde{C}^t(u)$, $u \geq t$, as in (\ref{tildeC}),
\beq
\label{vtyptildebis}
v(t,z)=\inf_{t\leq \tau \leq T}\widetilde{\mathbb{E}}\bigg\{\,\int_t^{\tau} e^{-\int_t^u \overline{\mu}(r)dr}\,R_c\Big(Y^{t,z}(u)\Big)du + e^{-\int_t^{\tau}\overline{\mu}(r)dr}\frac{1}{f_C(\tau)}\mathds{1}_{\{\tau < T\}}\,\bigg\}
\eeq
as in (\ref{vtyptilde}), we have
\beq
\label{Ytildevty}
\widetilde{\Gamma}^{-\frac{1}{y}}(t) = v(t,yC^{0}(t)).
\eeq
\end{proposition}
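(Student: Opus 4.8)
The plan is to evaluate $\widetilde{\Gamma}^{\xi}(t)$ at the distinguished value $\xi=-1/y$ and to recognise the resulting quantity as the optimal stopping problem (\ref{vtyptildebis}) defining $v$, started at the initial datum $yC^{0}(t)$. First I would substitute $\xi=-1/y$ directly into (\ref{Ytilde}). Since $-\frac{1}{\xi}C^{0}(u)=yC^{0}(u)$, the running reward $R_c\big(-\frac{1}{\xi}C^{0}(u)\big)$ becomes $R_c\big(yC^{0}(u)\big)$, while the discount factor $e^{-\int_t^u\overline{\mu}(r)dr}$ and the terminal term $e^{-\int_t^{\tau}\overline{\mu}(r)dr}\frac{1}{f_C(\tau)}\mathds{1}_{\{\tau<T\}}$ are untouched; under Assumption \ref{detrminsticcoeff} these last two are deterministic.

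Next I would identify the state process. Using the explicit form $Y^{t,y}(u)=y\,C^{0}(u)/C^{0}(t)$ of the uncontrolled capacity started at level $y$ at time $t$ (cf. the Appendix), evaluation at the $\mathcal{F}_t$-measurable initial datum $yC^{0}(t)$ gives $Y^{t,yC^{0}(t)}(u)=yC^{0}(u)$. Hence the running reward entering $v\big(t,yC^{0}(t)\big)$ through (\ref{vtyptildebis}) is precisely $R_c\big(yC^{0}(u)\big)$, which matches the expression obtained above, and the two terminal terms coincide as well. After this step the two sides of (\ref{Ytildevty}) differ only in the expectation operator.

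The remaining and genuinely non-cosmetic point is to reconcile those operators: $\widetilde{\Gamma}^{-1/y}(t)$ is a $\widetilde{\mathbb{P}}_0$-conditional expectation given $\mathcal{F}_t$, whereas $v\big(t,yC^{0}(t)\big)$ is an (unconditional) $\widetilde{\mathbb{P}}_t$-expectation. I would freeze the $\mathcal{F}_t$-measurable factor $C^{0}(t)$ and observe that every integrand then depends on the future only through $C^{0}(u)/C^{0}(t)$, a functional of the increments after $t$. The key identity is $\widetilde{W}^{0}(u)-\widetilde{W}^{0}(t)=W(u)-W(t)-\int_t^u\sigma(r)dr=\widetilde{W}^{t}(u)$, so that this process is a Brownian motion after $t$ both under $\widetilde{\mathbb{P}}_0$ (by conditional independence of increments given $\mathcal{F}_t$) and under $\widetilde{\mathbb{P}}_t$ (by definition on $\widetilde{\mathcal{F}}_{t,T}$). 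Consequently the conditional law under $\widetilde{\mathbb{P}}_0$ given $\mathcal{F}_t$ of any such future functional $\Phi$ equals its law under $\widetilde{\mathbb{P}}_t$, i.e. $\widetilde{\mathbb{E}}\{\Phi\mid\mathcal{F}_t\}=\widetilde{\mathbb{E}}_t\{\Phi\}$; equivalently this follows from $\mathcal{M}_0(T)=\mathcal{M}_0(t)\,\mathcal{M}_t(T)$ together with the continuous-time Bayes rule already used before the statement. Applying this to the running and terminal rewards, and then letting the identity pass through the optimisation, the essential infimum over $\mathcal{F}$-stopping times conditional on $\mathcal{F}_t$ collapses to the deterministic infimum defining $v(t,\cdot)$ evaluated at $yC^{0}(t)$, whence (\ref{Ytildevty}).

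The step I expect to be the most delicate is this last one, namely making rigorous that the $\essinf$ over $\mathcal{F}$-stopping times in $[t,T]$, taken conditionally on $\mathcal{F}_t$, coincides with the deterministic $\inf$ in the definition of $v$. This requires the measure reconciliation above to hold uniformly in $\tau$ and a standard dynamic-programming argument ensuring that the information in $\mathcal{F}_t$ beyond the current state $yC^{0}(t)$ is immaterial for the optimisation, so that one may restrict to stopping times measurable with respect to the future increments. By contrast, the identification of the integrands and of the discount and terminal terms is a routine substitution.
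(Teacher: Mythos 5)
Your proposal is correct and follows essentially the same route as the paper: substitute $\xi=-1/y$, identify $R_c(yC^{0}(u))$ with $R_c\bigl(Y^{t,yC^{0}(t)}(u)\bigr)$, and then reconcile the $\widetilde{\mathbb{P}}$-conditional expectation given $\mathcal{F}_t$ with the unconditional $\widetilde{\mathbb{P}}_t$-expectation using independence of the post-$t$ increments. The step you flag as delicate is exactly the one the paper makes explicit by passing to the canonical Wiener space, where $\overline{\mathbb{P}}$ factors as a product over $\mathcal{C}_0([0,t])\times\mathcal{C}_0([0,T-t])$ and every stopping time $\tau\geq t$ decomposes as $\tau(\overline{\omega}_1,\overline{\omega}_2)=t+\tau'_{\overline{\omega}_1}(\overline{\omega}_2)$ with $\tau'_{\overline{\omega}_1}$ a stopping time of the future filtration, which is precisely the restriction to future-increment-measurable stopping times that you invoke.
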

\begin{proof}

The proof borrows arguments from \cite{Chiarolla4}, proof of Theorem $4.1$. For $t \in [0,T)$ and $\tau \in [t,T]$, notice that
\begin{eqnarray}
\label{tolgocondizionamento}
\lefteqn{\widetilde{\mathbb{E}}\bigg\{\,\int_{t}^{\tau} e^{-\int_{t}^u\overline{\mu}(r)dr}\,R_{c}\left(y C^{0}(u) \right)\,du\, + e^{-\int_{t}^{\tau}\overline{\mu}(r)dr}\frac{1}{f_C(\tau)}\mathds{1}_{\{\tau < T\}}\Big|\,\mathcal{F}_{t}\,\bigg\}}   \\
&&=\widetilde{\mathbb{E}}\bigg\{\,\int_{t}^{\tau} e^{-\int_{t}^u\overline{\mu}(r)dr}\,R_{c}\left( yC^{0}(t)\widetilde{C}^{t}(u)\right)\,du\, + e^{-\int_{t}^{\tau}\overline{\mu}(r)dr}\frac{1}{f_C(\tau)}\mathds{1}_{\{\tau < T\}}\Big|\,\mathcal{F}_{t}\,\bigg\}. \nonumber
\end{eqnarray}

In order to take care of the conditioning, it is convenient to work on the canonical probability space $\left(\overline{\Omega},\overline{\mathbb{P}}\right)$, where $\overline{\mathbb{P}}$ is the Wiener measure on $\overline{\Omega}:=\mathcal{C}_0\left([0,T]\right)$, the space of all continuous functions on $[0,T]$ which are zero at $t=0$.
We denote by $\widetilde{W}(t,\overline{\omega})=\overline{\omega}(t)$ the coordinate mapping on $\mathcal{C}_0\left([0,T]\right)$, with $\overline{\omega}=\left(\overline{\omega}_1,\overline{\omega}_2\right)$ where $\overline{\omega}_1=\left\{\widetilde{W}(v), 0 \leq v \leq t\right\}$ and $\overline{\omega}_2=\left\{\widetilde{W}(v)-\widetilde{W}(t), t \leq v \leq T\right\}=\left\{\widetilde{W}'(v), 0 \leq v \leq T-t\right\}$.
Now, independence of Brownian increments induces a product-measure on $\mathcal{C}_0\left([0,T]\right)=\mathcal{C}_0\left([0,t]\right)\times\mathcal{C}_0\left([0,T-t]\right)$ and $\tau$, with $\tau \geq t$ a.s., may be written in the form $\tau\left(\overline{\omega}_1,\overline{\omega}_2\right)=t + \tau'_{\overline{\omega}_1}\left(\overline{\omega}_2\right)$ with $\tau'_{\overline{\omega}_1}\left(\cdot\right)$ a $\left\{\mathcal{F}^{\widetilde{W}'}_{v}\right\}_{0\leq v \leq T-t}$-stopping time for every $\overline{\omega}_1 \in \overline{\Omega}$ (for a classical reference for this see \cite{DellMeyer}, Theorem $103$, p.\ $151$, among others).
Then, since $\widetilde{C}^{t}(\cdot)$ is independent of $\mathcal{F}_{t}$, if we denote by $\widetilde{\mathbb{E}}_{\,\overline{\omega}_2}\{\cdot\}$ the expectation over $\overline{\omega}_2$ or $W'$, we can write the last conditional expectation in (\ref{tolgocondizionamento}) as $\Phi_{\tau'_{\overline{\omega}_1}}(t, yC^0(t))$, where  
$$\Phi_{\tau'_{\overline{\omega}_1}}(t,z):= \widetilde{\mathbb{E}}_{\,\overline{\omega}_2}\bigg\{\,\int_{t}^{t + \tau'_{\overline{\omega}_1}} e^{-\int_{t}^u\overline{\mu}(r)dr}\,R_{c}\left( z\widetilde{C}^{t}(u)\right)\,du\, + e^{-\int_{t}^{t + \tau'_{\overline{\omega}_1}}\overline{\mu}(r)dr}\frac{1}{f_C(t + \tau'_{\overline{\omega}_1})}\mathds{1}_{\{t + \tau'_{\overline{\omega}_1}  < T\}}\,\bigg\},$$
for any $\overline{\omega}_1 \in \overline{\Omega}$ fixed. Now (\ref{Ytildevty}) follows from (\ref{Ytilde}) and (\ref{vtyptildebis}), since $\essinf_{t \leq \tau \leq T}\Phi_{\tau'_{\overline{\omega}_1}}(t, yC^0(t)) = \essinf_{t \leq t + \tau'_{\overline{\omega}_1} \leq T}\Phi_{\tau'_{\overline{\omega}_1}}(t, yC^0(t)) = v(t,yC^0(t))$, for every $\overline{\omega}_1 \in \overline{\Omega}$.
\end{proof}

Recall (\ref{taustar2}). The following Proposition provides another representation of the base capacity $l^{*}(t) := -\frac{C^{0}(t)}{\xi^{*}(t)}$ (cf. (\ref{deflstar})).
\begin{proposition}
\label{reprlstar}
The base capacity $l^{*}(t)$, unique optional, upper right-continuous, positive solution of (\ref{representationproblem}), admits the representation
\beq
\label{lstarrepresentation}
l^{*}(t)= \sup\bigg\{yC^{0}(t) > 0 : v(t,yC^{0}(t)) = \frac{1}{f_C(t)} \bigg\}, \quad t \in [0,T),
\eeq
with $v$ as in (\ref{vtyptildebis}).
\end{proposition}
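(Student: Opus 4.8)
The plan is to obtain (\ref{lstarrepresentation}) as a change of variables in the definition (\ref{deflstar}) of $l^*(t)$, transporting the supremum that defines $\xi^*(t)$ to a supremum over capacity levels by means of Proposition \ref{propYtildevty}. No new estimates are needed; everything reduces to a substitution and a monotone reparametrization.

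First I would recall that, by (\ref{deflstar}), $l^*(t) = -C^{0}(t)/\xi^*(t)$, and that by Lemma \ref{existencexi} one has $\xi^*(t) < 0$ for every $t \in [0,T)$ a.s. As already observed just before the statement, the supremum in (\ref{taustar2}) may therefore be restricted to negative values, so that
$$\xi^*(t) = \sup\Big\{\xi < 0 : \widetilde{\Gamma}^{\xi}(t) = \tfrac{1}{f_C(t)} \Big\}.$$
I would then parametrize the negative half-line by $\xi = -1/y$ with $y > 0$, which is a continuous, strictly increasing bijection of $(0,\infty)$ onto $(-\infty,0)$. By Proposition \ref{propYtildevty}, eq. (\ref{Ytildevty}), we have $\widetilde{\Gamma}^{-1/y}(t) = v\left(t, y\,C^{0}(t)\right)$, so under this substitution the defining condition $\widetilde{\Gamma}^{\xi}(t) = 1/f_C(t)$ becomes $v\left(t, y\,C^{0}(t)\right) = 1/f_C(t)$. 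Hence the admissible set of $\xi$ is exactly the image, under $y \mapsto -1/y$, of the set $A_t := \{y > 0 : v(t, y\,C^{0}(t)) = 1/f_C(t)\}$.

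Next I would transport the supremum through the reparametrization. Since $g(y):=-1/y$ is continuous and strictly increasing on $(0,\infty)$, for every nonempty $A_t$ that is bounded above one has $\sup_{y \in A_t} g(y) = g(\sup A_t)$ (monotonicity gives the upper bound $g(y)\le g(\sup A_t)$, and choosing $y_n \in A_t$ with $y_n \uparrow \sup A_t$ together with continuity of $g$ gives the matching lower bound). Writing $\bar{y}_t := \sup A_t$, this yields $\xi^*(t) = -1/\bar{y}_t$, and therefore
$$l^*(t) = -\frac{C^{0}(t)}{\xi^*(t)} = C^{0}(t)\,\bar{y}_t = C^{0}(t)\,\sup A_t.$$
Since $C^{0}(t) > 0$, the scaling $y \mapsto y\,C^{0}(t)$ is itself an increasing bijection of $(0,\infty)$, so $C^{0}(t)\sup A_t = \sup\{y\,C^{0}(t) > 0 : v(t, y\,C^{0}(t)) = 1/f_C(t)\}$, which is precisely (\ref{lstarrepresentation}).

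The only delicate point is the interchange of the supremum with the change of variables. I must ensure that $A_t$ is nonempty and bounded above: both are guaranteed by the fact that $\xi^*(t)$ is a well-defined real number lying in $(-\infty,0)$, since $\xi^*(t) > -\infty$ forces $A_t \neq \emptyset$ while the strict inequality $\xi^*(t) < 0$ forces $\bar{y}_t < \infty$. I must also justify the identity $\sup_{y} g(y) = g(\sup A_t)$ in the case where $\sup A_t$ is not attained, which follows from continuity of $g$ at $\bar y_t$. Beyond this, the argument is a direct substitution built on Proposition \ref{propYtildevty}, so I expect no further obstacle.
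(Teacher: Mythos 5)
Your proposal is correct and follows essentially the same route as the paper's proof: the substitution $\xi=-1/y$, transporting the supremum through the monotone reparametrization $-\sup(-1/y)=\inf(1/y)$ and $C^{0}(t)/\inf(1/y)=C^{0}(t)\sup y$, and invoking Proposition \ref{propYtildevty} to replace $\widetilde{\Gamma}^{-1/y}(t)$ with $v(t,yC^{0}(t))$. Your explicit attention to the nonemptiness and boundedness of the level set and to the attainment of the supremum is a slight refinement of the paper's terser chain of equalities, but not a different argument.
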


\begin{proof}
For $t \in [0,T)$ and $y>0$ we have
\begin{eqnarray}
\label{lfrontiera}
l^{*}(t) &\hspace{-0.25cm}:=\hspace{-0.25cm}& - \frac{C^{0}(t)}{\xi^{*}(t)} = -\frac{C^{0}(t)}{\sup\left\{ \xi < 0 : \widetilde{\Gamma}^{\xi}(t) = \frac{1}{f_C(t)} \right\}}= -\frac{C^{0}(t)}{\sup\left\{ -\frac{1}{y} < 0 : \widetilde{\Gamma}^{-\frac{1}{y}}(t) = \frac{1}{f_C(t)} \right\}}\nonumber \\
&\hspace{-0.25cm}=\hspace{-0.25cm}&\frac{C^{0}(t)}{-\sup\left\{ -\frac{1}{y} < 0: \widetilde{\Gamma}^{-\frac{1}{y}}(t) = \frac{1}{f_C(t)} \right\}} = \frac{C^{0}(t)}{\inf \left\{\frac{1}{y} > 0 : \widetilde{\Gamma}^{-\frac{1}{y}}(t) = \frac{1}{f_C(t)} \right\}} \nonumber \\
&\hspace{-0.25cm}=\hspace{-0.25cm}&C^{0}(t)\,\sup \bigg\{y > 0 : \widetilde{\Gamma}^{-\frac{1}{y}}(t) = \frac{1}{f_C(t)} \bigg\} = \sup\bigg\{\,yC^{0}(t) > 0 : \widetilde{\Gamma}^{-\frac{1}{y}}(t) = \frac{1}{f_C(t)} \bigg\}\nonumber \\
&\hspace{-0.25cm}=\hspace{-0.25cm}&\sup\bigg\{\,yC^{0}(t) > 0 : v(t,yC^{0}(t)) = \frac{1}{f_C(t)} \bigg\},  \nonumber
\end{eqnarray}
where the last equality follows from Proposition \ref{propYtildevty}.
\end{proof}

Notice that $v(t,y) \leq \frac{1}{f_C(t)}$ for all $t \in [0,T)$ and $y>0$ (cf.\ (\ref{vtyptildebis})).
As in \cite{Chiarolla2}, eq.\ ($3.19$), introduce the \textsl{Continuation Region} (or `no-action region') of problem (\ref{vtyptildebis})
\beq
\label{contregion}
D:=\bigg\{(t,y) \in [0,T) \times (0,\infty) : v(t,y) < \frac{1}{f_C(t)} \bigg\}.
\eeq
Roughly speaking $D$ is the region where it is not profitable to invest, since the shadow value of installed capital is strictly less than the capital's replacement cost.
Similarly its complement is the \textsl{Stopping Region} (or `action region'), i.e.
\beq
\label{stoppingreg}
D^{c}:=\bigg\{(t,y) \in [0,T) \times (0,\infty) : v(t,y) = \frac{1}{f_C(t)} \bigg\}.
\eeq
That is the region where it is profitable to invest immediately.
The boundary between these two regions is the free boundary $\hat{y}(t)$ of the optimal stopping problem (\ref{vtyptildebis}).

\begin{theorem}
\label{lfreeboundary}
The base capacity process $l^{*}(t)$, unique optional, upper right-continuous, positive solution of (\ref{representationproblem}), is deterministic and coincides with the free boundary $\hat{y}(t)$ associated to the optimal stopping problem (\ref{vtyptildebis}). Hence
\beq
\label{lboundary}
l^{*}(t)=\sup\bigg\{\,z > 0: v(t,z) = \frac{1}{f_C(t)}\bigg\}\,\,\,\,\,\,\,\,\,\,\mbox{for}\,\,t \in [0,T).
\eeq
\end{theorem}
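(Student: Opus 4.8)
The plan is to read off the statement almost directly from Proposition~\ref{reprlstar} by means of a change of variables that removes the randomness, and then to match the resulting deterministic expression with the free boundary by invoking the monotonicity of $v(t,\cdot)$ in its second argument.

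First I would recall the representation obtained in Proposition~\ref{reprlstar},
\[
l^{*}(t)= \sup\bigg\{y\,C^{0}(t) > 0 : v\left(t,y\,C^{0}(t)\right) = \frac{1}{f_C(t)} \bigg\}.
\]
The decisive point is that, under Assumption~\ref{detrminsticcoeff}, the coefficients are deterministic, so the dynamics of $\widetilde{C}^{t}$ under $\widetilde{\mathbb{P}}$ are deterministic and $v(t,\cdot)$, defined in (\ref{vtyptildebis}) as an unconditional expectation, is a genuine deterministic function of $(t,y)$; the same is true of $f_C(t)$. Fixing $(\omega,t)$, the value $C^{0}(t,\omega)$ is a strictly positive constant, so $y\mapsto z:=y\,C^{0}(t,\omega)$ is a bijection of $(0,\infty)$ onto itself. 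Carrying out this substitution I would obtain
\[
l^{*}(t)=\sup\bigg\{z>0 : v(t,z)=\frac{1}{f_C(t)}\bigg\},
\]
and since the right-hand side no longer involves $\omega$, this simultaneously shows that $l^{*}(t)$ is deterministic and establishes (\ref{lboundary}).

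It remains to identify this deterministic quantity with the free boundary $\hat{y}(t)$ of the optimal stopping problem (\ref{vtyptildebis}). Here I would exploit that $R$ is strictly concave, hence $R_c$ strictly decreasing, while $Y^{t,y}(u)=y\,\widetilde{C}^{t}(u)$ is increasing in $y$; thus for every fixed stopping time the functional inside the essential infimum decreases in $y$, and therefore $y\mapsto v(t,y)$ is non-increasing. Combined with the bound $v(t,y)\le \frac{1}{f_C(t)}$ (obtained by taking $\tau=t$), this forces the $t$-section of the Stopping Region $D^{c}$, namely $\{y>0:v(t,y)=1/f_C(t)\}$, to be a lower interval whose supremum is exactly the point $\hat{y}(t)$ separating $D^{c}$ from the Continuation Region $D$. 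Hence $l^{*}(t)=\hat{y}(t)$.

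I expect the only genuinely delicate step to be the justification that $v(t,\cdot)$ is deterministic, which is precisely where Assumption~\ref{detrminsticcoeff} and Proposition~\ref{propYtildevty} are used: the conditioning on $\mathcal{F}_t$ disappears because $\widetilde{C}^{t}$ is independent of $\mathcal{F}_t$ and governed only by deterministic coefficients, so the essential infimum collapses to an ordinary infimum over stopping times and the $\omega$-dependence vanishes. The subsequent identification with $\hat{y}(t)$---checking that the stopping region is truly a lower interval so that its supremum is the free boundary---is then routine from the monotonicity of $v$, provided one confirms that the notion of $\hat{y}(t)$ used in the Appendix agrees with $\sup\{z>0:v(t,z)=1/f_C(t)\}$.
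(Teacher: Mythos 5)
Your proposal is correct and follows essentially the same route as the paper: both arguments start from Proposition \ref{reprlstar} and use that, for fixed $(\omega,t)$, multiplication by the strictly positive constant $C^{0}(\omega,t)$ is a bijection of $(0,\infty)$ onto itself, so the set $\{yC^{0}(\omega,t)>0 : v(t,yC^{0}(\omega,t))=1/f_C(t)\}$ coincides a.s. with the deterministic set $\{z>0 : v(t,z)=1/f_C(t)\}$ (the paper phrases this as two set inclusions rather than a substitution). The only difference is cosmetic: where the paper cites \cite{Chiarolla2}, (3.13), to identify the resulting supremum with $\hat{y}(t)$, you supply the short monotonicity argument for $v(t,\cdot)$ showing the stopping region's $t$-section is a lower interval, which is a harmless and correct filling-in of that citation.
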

\begin{proof}
Recall (\ref{lstarrepresentation}). Fix $t \in [0,T)$ and set
$$\tilde{z}(\omega,y):=yC^{0}(\omega,t).$$
It follows that
\begin{eqnarray*}
\bigg\{\,yC^{0}(\omega,t) > 0 : v(t,yC^{0}(\omega,t))=\frac{1}{f_C(t)}\bigg\} &\hspace{-0.25cm}=\hspace{-0.25cm}&  \bigg\{\,\tilde{z}(\omega,y) > 0 : v(t,\tilde{z}(\omega,y)) = \frac{1}{f_C(t)} \bigg\} \nonumber \\
&\hspace{-0.25cm} \subseteq \hspace{-0.25cm}& \bigg\{\,z > 0 : v(t,z) = \frac{1}{f_C(t)} \bigg\} \nonumber
\end{eqnarray*}
for a.e.\ $\omega \in \Omega$ and $y >0$, hence the inclusion holds a.s.\ for all $y>0$.

The reverse inclusion follows from the particular dependence of the geometric Brownian motion on its initial value. In fact, if $z>0$, then for each $\omega \in \Omega$ and $t \in [0,T)$, $z$ may be written as 
$$z=\tilde{z}(\omega, y(\omega,z)),$$ with $y(\omega,z):=\frac{z}{C^{0}(\omega,t)}$.
Therefore
\begin{eqnarray*}
\bigg\{\,z > 0 : v(t,z) = \frac{1}{f_C(t)} \bigg\} &\hspace{-0.25cm} = \hspace{-0.25cm}& \bigg\{\,y(\omega,z)C^{0}(\omega,t) > 0 :
v(t,y(\omega,z)C^{0}(\omega,t)) = \frac{1}{f_C(t)}\bigg\}  \nonumber \\
&\hspace{-0.25cm} \subseteq \hspace{-0.25cm}& \bigg\{\,yC^{0}(\omega,t) > 0 : v(t,yC^{0}(\omega,t)) = \frac{1}{f_C(t)} \bigg\}. \nonumber
\end{eqnarray*}
This inclusion holds for a.e.\ $\omega \in \Omega$, thus a.s.\
Hence, it holds $\widetilde{\mathbb{P}}$-a.s.\ that
\beq
\label{identifico}
\sup\bigg\{\,yC^{0}(\omega,t) > 0 : v(t,yC^{0}(\omega,t)) = \frac{1}{f_C(t)} \bigg\} = \sup\bigg\{\,z > 0 : v(t,z) = \frac{1}{f_C(t)} \bigg\}
\eeq
and $l^{*}(t)$ is deterministic (cf.\ (\ref{lstarrepresentation})).
Now the right-hand side of (\ref{identifico}) (cf.\ \cite{Chiarolla2}, eq.\ ($3.13$)) identifies $l^{*}(t)$ with the free boundary $\hat{y}(t)$ of problem (\ref{vtyptildebis}).
\end{proof}

Since $\hat{y}(t)$ coincides with $l^{*}(t)$, equation (\ref{representationproblem}) provides an integral equation for the free boundary $\hat{y}(t)$ which does not require a priori continuity of $\hat{y}$ and the smooth fit property (as instead that in \cite{Chiarolla2}) to be derived.
\begin{theorem}
\label{eqintegralefb}
The free boundary $\hat{y}(t)$ of problem (\ref{vtyptildebis}) is the unique upper right-continuous, positive solution of the integral equation
\beq
\label{equazintegralefb}
\widetilde{\mathbb{E}}\bigg\{\int_{0}^{T-t} e^{-\int_{t}^{t+v}\overline{\mu}(r)dr}R_c\bigg(\sup_{0 \leq u' < v}\bigg(\hat{y}(t + u') \frac{C^{0}(t+v)}{C^{0}(t + u')}\bigg)\bigg)\,dv\bigg\} =\frac{1}{f_C(t)},\,\,\,\,\,t \in [0,T).
\eeq
\end{theorem}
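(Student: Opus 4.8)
The plan is to obtain (\ref{equazintegralefb}) as the specialization of the representation problem (\ref{representationproblem}) to a deterministic time $\tau=t$, followed by the same change of measure and conditioning analysis used in Proposition \ref{propYtildevty}. Since Theorem \ref{lfreeboundary} gives $l^{*}(t)=\hat{y}(t)$ with $\hat{y}$ deterministic, and since any fixed $t\in[0,T)$ is a stopping time, I would first insert $\tau=t$ into (\ref{representationproblem}); because $t<T$ the indicator equals $1$. Dividing both sides by the strictly positive, $\mathcal{F}_t$-measurable factor $e^{-\int_0^t\mu_F(u)du}C^{0}(t)$ turns the right-hand side into $1/f_C(t)$ and replaces the weight $e^{-\int_0^s\mu_F}C^{0}(s)$ inside the conditional expectation by $e^{-\int_t^s\mu_F(u)du}\,C^{0}(s)/C^{0}(t)$.

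The next step is the measure change. Using $C^{0}(s)/C^{0}(t)=e^{-\int_t^s\mu_C(u)du}\mathcal{M}_t(s)$ from (\ref{GBM})--(\ref{expmg}), the weight becomes $e^{-\int_t^s\overline{\mu}(u)du}\,\mathcal{M}_t(s)$ with $\overline{\mu}=\mu_C+\mu_F$, and $\mathcal{M}_t(s)$ is precisely the Radon--Nikodym density (\ref{RADON}). By the continuous-time Bayes rule the conditional $\mathbb{P}$-expectation becomes a conditional $\widetilde{\mathbb{P}}_t$-expectation, exactly as in the computation preceding Proposition \ref{propYtildevty}. I would then note that for $t\le u\le s$ the ratio $C^{0}(s)/C^{0}(u)$ depends only on the Brownian increments after $t$, so the integrand $R_c\big(C^{0}(s)\sup_{t\le u\le s}(\hat{y}(u)/C^{0}(u))\big)$ is independent of $\mathcal{F}_t$ and the conditional expectation is a deterministic number. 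A substitution $s=v+t$, $u=u'+t$ reindexes $\sup_{t\le u\le s}$ as $\sup_{0\le u'\le v}$ and produces $R_c\big(\sup_{0\le u'\le v}\hat{y}(u'+t)C^{0}(v+t)/C^{0}(u'+t)\big)$. Finally, since $\{W(r)-W(t)\}_{r\ge t}$ has the same law under $\widetilde{\mathbb{P}}_t$ and under $\widetilde{\mathbb{P}}=\widetilde{\mathbb{P}}_0$, I may replace $\widetilde{\mathbb{E}}_t$ by $\widetilde{\mathbb{E}}$, which yields (\ref{equazintegralefb}).

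For uniqueness, the idea is to run this chain of equalities backwards. Given any upper right-continuous $g$ solving (\ref{equazintegralefb}) for every $t\in[0,T)$, the reverse substitution, measure change, and multiplication by $e^{-\int_0^t\mu_F}C^{0}(t)$ show that $g$ satisfies (\ref{representationproblem}) at every deterministic time $\tau=t$. To promote this to all stopping times $\tau\in\mathcal{T}$ — which is what the Bank--El Karoui uniqueness of $l^{*}$ (Proposition \ref{esistenzarappr}, via Lemma \ref{existencexi}) requires — I would repeat the canonical-space, independent-increments argument of Proposition \ref{propYtildevty}: because $g$ is deterministic and the post-$t$ increments are independent of $\mathcal{F}_t$, the conditional expectation at a stopping time $\tau$ is obtained by freezing $t=\tau(\omega)$ in the deterministic-time identity. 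Hence $-C^{0}/g$ solves the representation problem of Lemma \ref{existencexi} for all $\tau\in\mathcal{T}$, and its uniqueness forces $g\equiv l^{*}=\hat{y}$.

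I expect the main obstacle to be precisely this last promotion step: showing that satisfying the integral equation at all deterministic times suffices to satisfy the full stopping-time representation, so that the Bank--El Karoui uniqueness applies. The measurability and integrability bookkeeping in the backward Bayes/Fubini step, together with the careful use of the independence of increments to handle the random upper limit $\tau(\omega)$, are where the real care is needed; the forward derivation itself is essentially a change of variables.
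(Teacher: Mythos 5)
Your forward derivation (set $\tau=t$ in (\ref{representationproblem}), divide by the $\mathcal{F}_t$-measurable factor, apply the continuous-time Bayes rule with density $\mathcal{M}_t(\cdot)$, and drop the conditioning because $C^{0}(v+t)/C^{0}(u'+t)$ is independent of $\mathcal{F}_t$) is exactly the paper's proof. Your additional care about uniqueness --- promoting the deterministic-time identity back to all stopping times via independence of increments so that the Bank--El Karoui uniqueness applies --- is sound and in fact fills in a step the paper's very terse proof leaves implicit.
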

\begin{proof}
Fix $t \in [0,T)$. Set $\tau = t$ and recall that $l^{*}(t) = \hat{y}(t)$. Then write (\ref{representationproblem}) under $\widetilde{\mathbb{P}}$ and apply the continuous time Bayes' Rule to obtain
$$\widetilde{\mathbb{E}}\bigg\{\int_{0}^{T-t} e^{-\int_{t}^{t+v}\overline{\mu}(r)dr}R_c\bigg(\sup_{0 \leq u' < v}\bigg(\hat{y}(t +u') \frac{C^{0}(t +v)}{C^{0}(t +u')}\bigg)\bigg)\,dv \, \Big|\,\mathcal{F}_t\bigg\} =\frac{1}{f_C(t)}.$$
Now (\ref{equazintegralefb}) follows since $\frac{C^{0}(t+v)}{C^{0}(t+u')}$, $v>u' \geq 0$, is independent of $\mathcal{F}_t$.
\end{proof}

As in \cite{Chiarolla2}, Section $4$, we now make the following
\begin{Assumptions}
\label{markovian}
\hspace{10cm}
\begin{enumerate}
\item $R(C)=\frac{1}{\alpha}C^{\alpha}$ with $\alpha \in (0,1)$ (i.e.\ Cobb-Douglas production function);
\item $\mu_C(t) \equiv \mu_C,\,\,\,\,\sigma_C(t) \equiv \sigma_C,\,\,\,\,\mu_F(t) \equiv \mu_F,\,\,\,\,f_C(t) \equiv f_C.$
\end{enumerate}
\end{Assumptions}

\begin{remark}
Notice that under the second part of Assumption \ref{markovian}, the process $\frac{C^{0}(t+v)}{C^{0}(t+u')}$ has the same law as $\frac{C^{0}(v)}{C^{0}(u')}$. Hence, the integral equation (\ref{equazintegralefb}) takes the form
\beq
\label{equazintegralefbmark}
\widetilde{\mathbb{E}}\bigg\{\int_{0}^{T-t} e^{-\overline{\mu}v}R_c\bigg(\sup_{0 \leq u' < v}\bigg(\hat{y}(t+u') \frac{C^{0}(v)}{C^{0}(u')}\bigg)\bigg)\,dv\bigg\} = \frac{1}{f_C}.
\eeq
\end{remark}
\noindent Under Assumption \ref{markovian}, the properties of the free boundary obtained in \cite{Chiarolla2} hold; in particular, $\hat{y}(t)$ is nonincreasing and left-continuous on $[0,T)$ (cf.\ \cite{Chiarolla2}, Proposition $4.3$ $[i]_{bdy}$).
At this point one could be tempted to conclude that it is also right-continuous since it coincides with the base capacity process $l^{*}(t)$. However that is not the case as $l^{*}(t)$ (according to definition (\ref{urc}) and footnote $1$) is not equal to what is usually called limit superior but it only coincides with its upper envelope, hence $l^{*}(t)$ is only greater or equal its limit superior (as commonly defined).

The identification of the free boundary $\hat{y}$ of problem (\ref{vtyptildebis}) with the base capacity process $l^{*}$ enables us to obtain an upper bound for $\hat{y}$ (see Proposition \ref{CobbDbound}), and its explicit form in the infinite horizon case when it reduces to a point (see Proposition \ref{aCD} below).

\begin{proposition}
\label{CobbDbound}
Under Assumption \ref{markovian} the boundary $\hat{y}(t)$ of the continuation region $D$ satisfies
\beq
\label{boundfb}
\hat{y}(t) \leq \bigg[f_C\bigg(\frac{1 - e^{-(\mu_F + \alpha\mu_C  + \frac{1}{2}\alpha(1 - \alpha)\sigma_C^2)(T-t)}}{ \mu_F + \alpha\mu_C  + \frac{1}{2}\alpha(1 - \alpha)\sigma_C^2}\bigg) \bigg]^{\frac{1}{1-\alpha}}=:y^{*}(t),
\eeq
for every $t \in [0,T)$.
\end{proposition}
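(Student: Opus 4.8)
The plan is to exploit the integral equation (\ref{equazintegralefbmark}) satisfied by $\hat{y}$ under Assumption \ref{markovian}, and to bound the running supremum from below by a single one of its values. First I would observe that, since $C^{0}(0)=1$, the admissible choice $u'=0$ in the supremum appearing in (\ref{equazintegralefbmark}) gives
$$\sup_{0 \leq u' \leq v}\bigg(\hat{y}(u'+t)\,\frac{C^{0}(v)}{C^{0}(u')}\bigg) \geq \hat{y}(t)\,C^{0}(v), \qquad v \in [0,T-t].$$
Because $R_c$ is strictly decreasing (Assumption \ref{AssProfit}), applying $R_c$ reverses this inequality, so inserting it into (\ref{equazintegralefbmark}) yields
$$\frac{1}{f_C} \leq \widetilde{\mathbb{E}}\bigg\{\int_0^{T-t} e^{-\overline{\mu}v}\,R_c\big(\hat{y}(t)\,C^{0}(v)\big)\,dv\bigg\}.$$
Note that this step uses only the term $u'=0$, so the monotonicity of $\hat{y}$ (property $3$ of Theorem \ref{Cfb}) is not even needed here.

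Next I would specialize to the Cobb-Douglas case, where $R_c(x)=x^{\alpha-1}$. Since $\hat{y}(t)$ is deterministic (Theorem \ref{lfreeboundary}), it factors out of the expectation, reducing the bound to
$$\frac{1}{f_C} \leq \big(\hat{y}(t)\big)^{\alpha-1}\int_0^{T-t} e^{-\overline{\mu}v}\,\widetilde{\mathbb{E}}\big\{\big(C^{0}(v)\big)^{\alpha-1}\big\}\,dv.$$
The core computation is the Gaussian expectation $\widetilde{\mathbb{E}}\{(C^{0}(v))^{\alpha-1}\}$. Under $\widetilde{\mathbb{P}}$ the process $\widetilde{W}^0(s)=W(s)-\sigma_C s$ is a standard Brownian motion, so substituting $W(v)=\widetilde{W}^0(v)+\sigma_C v$ into $C^{0}(v)=e^{-\mu_C v}\mathcal{M}_0(v)$ gives the explicit form $C^{0}(v)=\exp\{(-\mu_C+\tfrac12\sigma_C^2)v+\sigma_C\widetilde{W}^0(v)\}$. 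Raising to the power $\alpha-1$ and using $\widetilde{\mathbb{E}}\{e^{\gamma\widetilde{W}^0(v)}\}=e^{\gamma^2 v/2}$ turns the integrand into a pure exponential $e^{-\lambda v}$.

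The only delicate point is the bookkeeping of the exponent. Collecting the Girsanov drift, the factor $e^{-\overline{\mu}v}$ with $\overline{\mu}=\mu_C+\mu_F$, and the quadratic term $\tfrac12(\alpha-1)^2\sigma_C^2$ from the Gaussian moment, one verifies that the coefficient of $v$ collapses to $-\lambda$ with $\lambda=\mu_F+\alpha\mu_C+\tfrac12\alpha(1-\alpha)\sigma_C^2$; this is where a sign error is easiest to make, and I would carry out the algebra carefully. The remaining integral is then elementary, $\int_0^{T-t}e^{-\lambda v}\,dv=(1-e^{-\lambda(T-t)})/\lambda$, so the estimate becomes $\tfrac{1}{f_C}\leq(\hat{y}(t))^{\alpha-1}(1-e^{-\lambda(T-t)})/\lambda$. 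Finally, since $\alpha-1<0$, rearranging and raising both sides to the power $1/(1-\alpha)$ inverts the inequality in the right direction and produces exactly the claimed bound $\hat{y}(t)\leq y^{*}(t)$.
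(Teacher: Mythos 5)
Your proposal is correct and follows essentially the same route as the paper: the paper starts from the representation formula (\ref{representationproblem}) with $\tau=t$ under $\mathbb{P}$ and bounds the running supremum by its value at $u=t$, which is exactly your choice $u'=0$ in (\ref{equazintegralefbmark}) after the change of measure, and the subsequent Gaussian-moment computation yielding the exponent $\mu_F+\alpha\mu_C+\tfrac12\alpha(1-\alpha)\sigma_C^2$ is the same. Your exponent bookkeeping under $\widetilde{\mathbb{P}}$ checks out, so the argument is sound.
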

\begin{proof}

Fix $t \in [0,T)$. The representation formula (\ref{representationproblem}) for $\tau=t$ and in the Cobb-Douglas case becomes
\beq
\label{CDTfinito}
e^{-\mu_F t}\frac{1}{f_C}= \mathbb{E}\bigg\{\,\int_{t}^{T} e^{-\mu_Fs} \frac{C^{0}(s)}{C^{0}(t)}\,\bigg(\sup_{t \leq u < s}\bigg ( C^{0}(s)\frac{l^{*}(u)}{C^{0}(u)}\bigg )\bigg)^{\alpha-1}\,ds\,\Big|\,\mathcal{F}_{t}\,\bigg\}.
\eeq
Set $\widetilde{\mu}_C:=\mu_C + \frac{1}{2}\sigma^2_C$, then the right-hand side of (\ref{CDTfinito}) gives
\begin{eqnarray}
\label{CDTfinito2}
\lefteqn{\mathbb{E}\bigg\{\,\int_{t}^{T} e^{-\mu_Fs} \frac{C^{0}(s)}{C^{0}(t)}\,\bigg(\sup_{t \leq u < s}\bigg ( C^{0}(s)\frac{l^{*}(u)}{C^{0}(u)}\bigg )\bigg)^{\alpha-1}\,ds\,\Big|\,\mathcal{F}_{t}\,\bigg\}} \nonumber \\
& & = \mathbb{E}\bigg\{\,\int_{t}^{T} e^{-\mu_Fs} \frac{C^{0}(s)}{C^{0}(t)}\,\inf_{t \leq u < s}\bigg ( C^{0}(s)\frac{l^{*}(u)}{C^{0}(u)}\bigg)^{\alpha-1}\,ds\,\Big|\,\mathcal{F}_{t}\,\bigg\} \\
& & \leq \mathbb{E}\bigg\{\int_{t}^{T} e^{-\mu_Fs} \,e^{-\widetilde{\mu}_C(s-t) + \sigma_C(W(s) -W(t))}\,[l^{*}(t)]^{\alpha-1}\,e^{(\alpha-1)\left(-\widetilde{\mu}_C(s -t) + \sigma_C(W(s) -W(t))\right)}\,ds\Big|\,\mathcal{F}_{t}\,\bigg\} \nonumber  \\
& & = e^{-\mu_F t}\,[l^{*}(t)]^{\alpha-1}\,\,\mathbb{E}\bigg\{\int_{0}^{T-t} e^{-\mu_Fv}\,e^{-\widetilde{\mu}_Cv + \sigma_C(W(v+t) -W(t))} \nonumber  \\
& & \hspace{4.5cm}\times\,e^{(\alpha-1)(-\widetilde{\mu}_Cv + \sigma_C(W(v+t) -W(t)))}\,dv\Big|\,\mathcal{F}_{t}\,\bigg\} \nonumber
\end{eqnarray}
Since the Brownian increments in the integral above are independent of $\mathcal{F}_{t}$, we obtain

\begin{eqnarray}
\label{CDTfinito2bis}
\lefteqn{\mathbb{E}\bigg\{\,\int_{t}^{T} e^{-\mu_Fs} \frac{C^{0}(s)}{C^{0}(t)}\,\bigg(\sup_{t \leq u < s}\bigg ( C^{0}(s)\frac{l^{*}(u)}{C^{0}(u)}\bigg )\bigg)^{\alpha-1}\,ds\,\Big|\,\mathcal{F}_{t}\,\bigg\}} \nonumber \\
&& \leq e^{-\mu_F t}\,[l^{*}(t)]^{\alpha-1}\,\mathbb{E}\bigg\{\int_{0}^{T-t} e^{-\mu_Fv}\,e^{-\widetilde{\mu}_Cv + \sigma_C(W(v+t) -W(t))} \nonumber \\
&& \hspace{4.5cm} \times\,e^{(\alpha-1)(-\widetilde{\mu}_C v + \sigma_C(W(v+t) -W(t)))}\,\bigg\}\,dv \\
&& = e^{-\mu_F t}\,[l^{*}(t)]^{\alpha-1}\,\int_{0}^{T-t} e^{-\mu_Fv} \,\mathbb{E}\left\{\,e^{\alpha\left(-\widetilde{\mu}_Cv + \sigma_C(W(v+t) -W(t))\right)}\right\}\,dv\nonumber \\
&&= e^{-\mu_F t}\,[l^{*}(t)]^{\alpha-1}\,\int_{0}^{T-t} e^{-\mu_Fv}e^{-\alpha \widetilde{\mu}_C v}\,e^{\frac{1}{2}\alpha^2\sigma_C^2\, v} \,dv. \nonumber
\end{eqnarray}

Notice that
$$\mu_F + \alpha\widetilde{\mu}_C - \frac{1}{2}\alpha^2\sigma_C^2 = \mu_F + \alpha\mu_C  + \frac{1}{2}\alpha(1 -\alpha)\sigma_C^2 > 0,$$
hence (\ref{CDTfinito}) and (\ref{CDTfinito2bis}) imply that
\begin{eqnarray}
\label{CDTfinito3}
e^{-\mu_F t}\frac{1}{f_C} & \hspace{-0.25cm} \leq  \hspace{-0.25cm}&  e^{-\mu_F t}\, [l^{*}(t)]^{\alpha-1} \int_{0}^{T-t} e^{-\left(\mu_F + \alpha\mu_C  + \frac{1}{2}\alpha(1 -\alpha)\sigma_C^2\right) v}\,dv  \\
& \hspace{-0.25cm} =  \hspace{-0.25cm}& e^{-\mu_F t}\, [l^{*}(t)]^{\alpha-1} \bigg(\frac{ 1 - e^{-\left(\mu_F +\alpha\mu_C  + \frac{1}{2}\alpha(1 -\alpha)\sigma_C^2\right)(T-t)}}{\mu_F +\alpha\mu_C  + \frac{1}{2}\alpha(1 -\alpha)\sigma_C^2}\bigg).\nonumber
\end{eqnarray}
Now (\ref{CDTfinito3}) gives
\beq
\label{CDTfinito4}
[l^{*}(t)]^{1 - \alpha} \leq f_C\,\bigg(\frac{ 1 - e^{-\left(\mu_F +\alpha\mu_C  + \frac{1}{2}\alpha(1 -\alpha)\sigma_C^2\right)(T-t)}}{\mu_F +\alpha\mu_C  + \frac{1}{2}\alpha(1 -\alpha)\sigma_C^2}\bigg) =:[y^{*}(t)]^{1-\alpha},
\eeq
and (\ref{boundfb}) follows from the identification of $l^{*}(\cdot)$ with $\hat{y}(\cdot)$ (cf.\ Theorem \ref{lfreeboundary}).
\end{proof}

\begin{remark}
Notice that the curve $y^{*}(t)$ is exactly what in \cite{Chiarolla4} was incorrectly identified as the free boundary between the `action' and the `no-action' regions. In \cite{Chiarolla2} the authors characterized the free boundary $\hat{y}(t)$ as the unique solution of a nonlinear integral equation (see \cite{Chiarolla2}, Theorem 4.8). Then, by using a discrete approximation of such integral equation, they showed that $\hat{y}(t) \leq y^{*}(t)$, for $t \leq T$. That is exactly what we proved here in Proposition \ref{CobbDbound}.
\end{remark}

\begin{remark}
The arguments in the proof of Proposition \ref{CobbDbound} apply even under the more general conditions of Assumption \ref{detrminsticcoeff}. That is, under deterministic, time-dependent coefficients we have
$$\hat{y}(t) \leq \bigg[ f_C(t) \int_0^{T-t} e^{-\int_t^{v+t}\left(\mu_F(s) + \alpha\mu_C(s) + \frac{1}{2}\alpha(1-\alpha)\sigma_C^2(s)\right)\,ds}dv\bigg]^{\frac{1}{1-\alpha}}, \qquad t \in [0,T).$$
\end{remark}

In this Section we have linked the Bank-El Karoui's probabilistic approach to the variational approach followed by Chiarolla and Haussmann in \cite{Chiarolla4} and \cite{Chiarolla2} for an irreversible investment problem similar to (\ref{optimalproblem}). Under Assumption \ref{detrminsticcoeff} we have proved that the base capacity process $l^{*}(t)$ is a deterministic process and it coincides with the free boundary of the optimal stopping problem (\ref{vtyptildebis}).
We have characterized the free boundary as the unique solution of an integral equation based on the stochastic Representation Theorem of \cite{BankElKaroui}.
Even under Assumption \ref{markovian}, the integral equation for the free boundary (\ref{equazintegralefb}) cannot be analitically solved when the time horizon is finite. However it is possible to find a curve
bounding the free boundary from above. In Section \ref{CDinfinitehorizon} we shall see that, instead, when $T=+\infty$ (as in H.\ Pham \cite{Pham}) the free boundary is a constant whose value we find explicitly by applying Proposition \ref{aCD}.


\section{Explicit Results when $T = + \infty$}
\label{CDinfinitehorizon}

In this Section, with $T=+\infty$ and under Assumption \ref{markovian}, we set $f_C = 1$ in order to compare our finding with the results in H.\ Pham \cite{Pham}. As one would expect, when the time horizon is infinite, the free boundary is a point. That is what we show below.
\begin{proposition}
\label{aCD}
The unique solution of the representation problem (\ref{representationproblem})
is given by
\beq
\label{lCobbDouglas}
l^{*}(t) = \Big[\frac{2}{2\mu_F - \sigma_C^2\beta_{-} - \alpha\sigma_C^2(1 + \beta_{+})}\Big]^{\frac{1}{1-\alpha}} = : a
\eeq
where $\beta_{\pm}$ are, respectively, the positive and negative roots of $\frac{1}{2}\sigma_C^2 x^2 + \widetilde{\mu}_Cx - \mu_F =0$ with $\widetilde{\mu}_C := \mu_C + \frac{1}{2}\sigma_C^2$.

Hence (cf.\ Definition \ref{basecapacity} and Theorem \ref{ottimasol}) the optimal capacity is given by
\beq
\label{ottimaCnustar}
C^{y,\hat{\nu}}(t)= C^{(a)}(t) \equiv C^{0}(t)\bigg( y \vee \sup_{0 \leq u \leq t}\bigg (\frac{a}{C^{0}(u)}\bigg )\bigg).
\eeq
\end{proposition}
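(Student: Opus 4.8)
The plan is to exhibit the constant process $l^{*}(t)\equiv a$ as a solution of the representation problem (\ref{representationproblem}) and then to invoke the uniqueness already guaranteed by Proposition \ref{esistenzarappr}. Since under Assumption \ref{markovian} the coefficients are constant and $T=+\infty$, the representation problem is invariant under time shifts, so a constant ansatz is natural. Substituting $l^{*}\equiv a$, $f_C=1$ and $R_c(C)=C^{\alpha-1}$ into (\ref{representationproblem}), the running supremum becomes $\sup_{\tau\le u\le s}(a/C^{0}(u))=a/\inf_{\tau\le u\le s}C^{0}(u)$, so the left-hand side equals $a^{\alpha-1}\,\mathbb{E}\{\int_\tau^\infty e^{-\mu_F s}(C^{0}(s))^{\alpha}(\inf_{\tau\le u\le s}C^{0}(u))^{1-\alpha}\,ds\,|\,\mathcal{F}_\tau\}$.

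First I would factor out the $\mathcal{F}_\tau$-measurable part. Writing $C^{0}(s)=C^{0}(\tau)\,\zeta(s)$ with $\zeta(s):=C^{0}(s)/C^{0}(\tau)$, the homogeneity of the integrand in $C^{0}$ gives $(C^{0}(s))^{\alpha}(\inf_{\tau\le u\le s}C^{0}(u))^{1-\alpha}=C^{0}(\tau)\,\zeta(s)^{\alpha}(\inf_{\tau\le u\le s}\zeta(u))^{1-\alpha}$. Since the post-$\tau$ increments of the driving Brownian motion are independent of $\mathcal{F}_\tau$ and stationary, after the change of variable $s=\tau+v$ the conditional expectation collapses to the deterministic constant $K:=\mathbb{E}\{\int_0^\infty e^{-\mu_F v}\zeta(v)^{\alpha}(\inf_{0\le w\le v}\zeta(w))^{1-\alpha}\,dv\}$, where now $\zeta$ is a geometric Brownian motion started at $1$. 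Hence (\ref{representationproblem}) holds for \emph{every} $\tau\in\mathcal{T}$ precisely when the single algebraic relation $a^{1-\alpha}=K$ is satisfied; this $\tau$- and $\omega$-independence is exactly what makes the constant ansatz self-consistent.

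The heart of the proof is the evaluation of $K$. Using $C^{0}(t)=e^{-\widetilde{\mu}_C t+\sigma_C W(t)}$ (cf. (\ref{GBM})--(\ref{expmg})), write $\zeta(v)=e^{X(v)}$ with $X(v)=-\widetilde{\mu}_C v+\sigma_C W(v)$ and $\inf_{0\le w\le v}\zeta(w)=e^{\underline{X}(v)}$, $\underline{X}(v):=\min_{0\le w\le v}X(w)$. Sampling at an independent exponential time $e_{\mu_F}\sim\mathrm{Exp}(\mu_F)$ turns $K$ into $\frac{1}{\mu_F}\mathbb{E}[e^{\alpha A-B}]$, where $A:=X(e_{\mu_F})-\underline{X}(e_{\mu_F})\ge0$ and $B:=-\underline{X}(e_{\mu_F})\ge0$. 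By the Wiener--Hopf factorization for Brownian motion, $A$ and $B$ are independent and exponentially distributed with rates $-\beta_{-}$ and $\beta_{+}$ respectively, where $\beta_{\pm}$ are the roots of $\frac12\sigma_C^2 x^2+\widetilde{\mu}_C x-\mu_F=0$. This gives $K=\frac{1}{\mu_F}\frac{-\beta_{-}}{-\beta_{-}-\alpha}\frac{\beta_{+}}{\beta_{+}+1}$, and the Vieta relation $\beta_{+}\beta_{-}=-2\mu_F/\sigma_C^2$ reduces it to $K=2/(2\mu_F-\sigma_C^2\beta_{-}-\alpha\sigma_C^2(1+\beta_{+}))$, so that $a=K^{1/(1-\alpha)}$ is exactly (\ref{lCobbDouglas}). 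Uniqueness then identifies this $a$ with $l^{*}$, and the formula (\ref{ottimaCnustar}) for the optimal capacity follows at once by inserting $l^{*}\equiv a$ into (\ref{Coptimal}) and Theorem \ref{ottimasol}.

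I expect the main obstacle to be the evaluation of $K$: rigorously justifying the exponential-time sampling together with the Wiener--Hopf factorization (equivalently, computing the joint law of the drifted Brownian motion $X$ and its running minimum $\underline{X}$) and producing the correct exponential rates in terms of $\beta_{\pm}$. One must also check integrability, i.e. that $\mathbb{E}[e^{\alpha A}]<\infty$, which amounts to $-\beta_{-}>\alpha$; this holds because $\frac12\sigma_C^2\alpha^2-\widetilde{\mu}_C\alpha<0\le\mu_F$ for $\alpha\in(0,1)$ and $\mu_C,\mu_F\ge0$, and the same inequalities force the denominator in (\ref{lCobbDouglas}) to be strictly positive, so that $a$ is well defined and strictly positive.
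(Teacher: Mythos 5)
Your proposal is correct and follows essentially the same route as the paper: a constant ansatz, reduced by independence and stationarity of the Brownian increments to a single algebraic equation $a^{1-\alpha}=K$, with $K$ evaluated by sampling at an independent exponential time and invoking the Wiener--Hopf/duality factorization of the drifted Brownian motion and its running extremum (the paper phrases this with $Y=-X$ and its running maximum, citing excursion theory and the duality theorem, which is the same tool). The only substantive addition is your explicit check that $-\beta_{-}>\alpha$, ensuring integrability and positivity of the denominator in (\ref{lCobbDouglas}), a point the paper leaves implicit.
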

\begin{proof}
We make the \textsl{ansatz} that $l^{*}(t)\equiv a$ for all $t \geq 0$ and we plug it into the left-hand side of (\ref{representationproblem}) to obtain
\begin{eqnarray}
\label{costCobb}
\lefteqn{a^{\alpha-1}\,\mathbb{E}\bigg\{\,\int_{\tau}^{\infty} e^{-\mu_Fs} \frac{C^{0}(s)}{C^{0}(\tau)}\,\bigg[\sup_{\tau \leq u \leq s}\bigg (\frac{C^{0}(s)}{C^{0}(u)}\bigg )\bigg]^{\alpha-1}\,ds\,\Big|\,\mathcal{F}_{\tau}\,\bigg\}} \nonumber \\
&&=a^{\alpha-1}\,\mathbb{E}\bigg\{\,\int_{\tau}^{\infty} e^{-\mu_Fs} \frac{C^{0}(s)}{C^{0}(\tau)}\,\inf_{\tau \leq u \leq s}\bigg ( \Big[\frac{C^{0}(s)}{C^{0}(u)}\Big]^{\alpha-1}\bigg )\,ds\,\Big|\,\mathcal{F}_{\tau}\,\bigg\}\nonumber \\
&&=a^{\alpha-1}\,\mathbb{E}\bigg\{\,\int_{\tau}^{\infty} e^{-\mu_Fs} e^{\sigma_C\left(W(s) -W(\tau)\right) - \widetilde{\mu}_C(s-\tau)}\,\\
&&\hspace{2.5cm}\times \inf_{0 \leq u' \leq s-\tau} \bigg[ e^{\sigma_C\left(W(s) -W(u'+\tau)\right) - \widetilde{\mu}_C(s-u'-\tau)}\bigg]^{(\alpha-1)}\,ds\,\Big|\,\mathcal{F}_{\tau}\,\bigg\}  \nonumber  \\
&&=a^{\alpha-1}e^{-\mu_F\tau}\,\mathbb{E}\bigg\{\,\int_{0}^{\infty} e^{-\mu_Fv}e^{\sigma_C W(v) - \widetilde{\mu}_Cv}\,\inf_{0 \leq u' \leq v} \bigg( e^{(\alpha-1)\left(\sigma_C\left(W(v) -W(u')\right) - \widetilde{\mu}_C(v-u')\right)}\bigg)\,dv\,\bigg\}\nonumber
\end{eqnarray}
since the Brownian increments are independent of $\mathcal{F}_{\tau}$.

If we now define $Y(v):=\widetilde{\mu}_Cv - \sigma_C W(v)$,\, $\underline{Y}(v):=\inf_{0 \leq u' \leq v}Y(u')$ and $\overline{Y}(v):=\sup_{0 \leq u' \leq v}Y(u')$, then we have
\begin{eqnarray}
\label{costCobb2}
&& a^{\alpha-1}e^{-\mu_F\tau}\,\mathbb{E}\bigg\{\,\int_{0}^{\infty} e^{-\mu_Fv}e^{\sigma_C W(v) - \widetilde{\mu}_Cv}\,\inf_{0 \leq u' \leq v} \bigg( e^{(\alpha-1)\left(\sigma_C\left(W(v) -W(u')\right) - \widetilde{\mu}_C(v-u')\right)}\bigg)\,dv\,\bigg\}\nonumber \\
&&= a^{\alpha-1}e^{-\mu_F\tau}\,\mathbb{E}\bigg\{\,\int_{0}^{\infty} e^{-\mu_Fv} e^{-\alpha Y(v)}e^{(\alpha - 1)\overline{Y}(v)} dv \bigg \} \nonumber \\
&&= \frac{1}{\mu_F}a^{\alpha-1}e^{-\mu_F\tau}\,\mathbb{E}\bigg\{\,\int_{0}^{\infty} \mu_F\,e^{-\mu_Fv}e^{- \alpha \left(Y(v) - \overline{Y}(v)\right)}e^{-\overline{Y}(v)}\,dv\bigg\}    \\
&&= \frac{1}{\mu_F}a^{\alpha-1}e^{-\mu_F\tau}\,\mathbb{E}\left\{e^{-\alpha \left(Y(\tau(\mu_F)) - \overline{Y}(\tau(\mu_F))\right)}e^{-\,\overline{Y}(\tau(\mu_F))}\right\}, \nonumber
\end{eqnarray}
where $\tau(\mu_F)$ denotes an independent exponentially distributed random time.

Using the Excursion Theory for Levy processes (cf. \cite{Bertoin}), $Y - \overline{Y}$ is independent of $\overline{Y}$, and by the Duality Theorem, $Y - \overline{Y}$ has the same distribution as $\underline{Y}$. Hence from (\ref{costCobb2}) we obtain
\begin{eqnarray}
\label{costCobb3}
\lefteqn{
\frac{1}{\mu_F}a^{\alpha-1}e^{-\mu_F\tau}\,\mathbb{E}\left\{e^{-\alpha \left(Y(\tau(\mu_F)) - \overline{Y}(\tau(\mu_F))\right)}e^{-\overline{Y}(\tau(\mu_F))}\right\}} \nonumber \\
&&=\frac{1}{\mu_F}a^{\alpha-1}e^{-\mu_F\tau}\,\mathbb{E}\left\{e^{-\alpha \underline{Y}(\tau(\mu_F))}\right\}\mathbb{E}\left\{e^{\,-\overline{Y}(\tau(\mu_F))}\right\}.
\end{eqnarray}
It is well known that for a Brownian motion with drift
$$\mathbb{E}\left\{e^{z \overline{Y}(\tau(\mu_F))}\right\}=\frac{\beta_{+}}{\beta_{+} - z}\,\,\,\,\,\,\,\,\mbox{and}\,\,\,\,\,\,\,\,\mathbb{E}\left\{e^{z\underline{Y}(\tau(\mu_F))}\right\}=\frac{\beta_{-}}{\beta_{-} - z},$$
if $\beta_{+}$ and $\beta_{-}$ are, respectively, the positive and negative roots of $\frac{1}{2}\sigma_C^2 x^2 + \widetilde{\mu}_Cx - \mu_F =0$, i.e.
$$\beta_{\pm}= -\frac{\widetilde{\mu}_C}{\sigma_C^2} \pm \sqrt{\bigg(\frac{\widetilde{\mu}_C}{\sigma_C^2}\bigg)^2 + \frac{2\mu_F}{\sigma_C^2}}.$$
Hence (cf.\ (\ref{representationproblem}))
\begin{eqnarray}
\label{costCobb4}
e^{-\mu_F \tau}&\hspace{-0.25cm} = \hspace{-0.25cm}&\mathbb{E}\bigg\{\,\int_{\tau}^{\infty} e^{-\mu_Fs} \frac{C^{0}(s)}{C^{0}(\tau)}\,\bigg[C^{0}(s) \sup_{\tau \leq u \leq s}\bigg (\frac{l^{*}(u)}{C^{0}(u)}\bigg )\bigg]^{\alpha-1}\,ds\,\Big|\,\mathcal{F}_{\tau}\,\bigg\} \nonumber  \\
&\hspace{-0.25cm} = \hspace{-0.25cm}&\frac{1}{\mu_F}a^{\alpha-1}e^{-\mu_F\tau}\mathbb{E}\left\{e^{-\alpha \underline{Y}(\tau(\mu_F))}\right\}\mathbb{E}\left\{e^{\,-\overline{Y}(\tau(\mu_F))}\right\} \\
&\hspace{-0.25cm} = \hspace{-0.25cm}&\frac{1}{\mu_F}a^{\alpha-1}e^{-\mu_F\tau}\frac{\beta_{+}\beta_{-}}{(1 + \beta_{+})(\alpha + \beta_{-})}.\nonumber
\end{eqnarray}

Then, we solve for $a$ and we obtain
$$a^{\alpha-1} = \bigg(\frac{\mu_F(1 + \beta_{+})(\alpha + \beta_{-})}{\beta_{+}\beta_{-}}\bigg),$$
which may also be written as
$$a = \bigg(\frac{2}{2\mu_F - \sigma_C^2\beta_{-} - \alpha\sigma_C^2(1 + \beta_{+})}\bigg)^{\frac{1}{1 - \alpha}}$$
being $\beta_{+}\beta_{-}= - \frac{2\mu_F}{\sigma_C^2}$.

Hence (cf.\ Theorem \ref{ottimasol}) the optimal capacity is
\beq
\label{optimalCCobbDouglas}
C^{y,\hat{\nu}}(t)=C^{(a)}(t)= C^{0}(t)\bigg( y \vee \sup_{0 \leq u \leq t}\bigg (\frac{a}{C^{0}(u)}\bigg )\bigg).
\eeq
\end{proof}

From Remark \ref{trovonubarrato} we have
\beq
\label{nustarCD2}
\overline{\nu}^{y}(t) = \sup_{0 \leq u \leq t}\bigg(\frac{a - yC^{0}(u)}{C^{0}(u)}\bigg) \vee 0,
\eeq
and the corresponding control $\hat{\nu}(t)$ (cf.\ (\ref{nubarradefinizione})) makes the diffusion reflect at the boundary $a$, it is the local time of $C^{y,\hat{\nu}}(t)$ at $a$.

Notice that the boundary $a$ in (\ref{lCobbDouglas}) coincides with the free boundary $k_b$ obtained via a viscosity solution approach by H.\ Pham in \cite{Pham} for a unit cost of investment $p$.
In fact from \cite{Pham}, Example $1.5.1$
$$k_b^{\alpha-1}= \frac{1-m}{C(\alpha -m)},$$
with
$$C=\frac{1}{\mu_F +\alpha\widetilde{\mu}_C - \frac{\alpha^2 \sigma_C^2 }{2}}\,\,\,\,\,\,\,\,\,\,\mbox{and}\,\,\,\,\,\,\,\,\,\,\,\,\,\,m = -\beta_{+},$$
and it is easy to see that
\beq
\label{akb}
a^{\alpha-1}=\frac{\mu_F(1 + \beta_{+})(\alpha + \beta_{-})}{\beta_{+}\beta_{-}}=\frac{1-m}{C(\alpha -m)}=k_b^{\alpha-1},
\eeq
hence $a=k_b$.

\begin{remark}
For a general production function $R(\cdot)$ satisfying Assumption \ref{AssProfit}, to find the free boundary $a$ one should solve the analogue of (\ref{costCobb3}), i.e.
$$\frac{1}{\mu_F}\mathbb{E}\left\{e^{-\underline{Y}(\tau(\mu_F))} R_{c}\left( a \, e^{-\underline{Y}(\tau(\mu_F))}\right) \right\}\mathbb{E}\left\{e^{-\overline{Y}(\tau(\mu_F))}\right\} =1,$$
or equivalently
$$\frac{1}{\mu_F}\mathbb{E}\left\{e^{-\underline{Y}(\tau(\mu_F))} R_{c}\left( a \, e^{-\underline{Y}(\tau(\mu_F))}\right) \right\} \frac{\beta_{+}}{1 + \beta_{+}} = 1.$$
That is, $a$ is the unique solution of
\beq
\label{remarkcostCD2}
\mathbb{E}\left\{e^{-\underline{Y}(\tau(\mu_F))} R_{c}\left( a \, e^{-\underline{Y}(\tau(\mu_F))}\right) \right\} = \frac{\mu_F (1 +\beta_{+})}{\beta_{+}}.
\eeq
Since now $-\underline{Y}(\tau(\mu_F)) = \overline{(-Y)}(\tau(\mu_F))$, and $\overline{(-Y)}(\tau(\mu_F))$ has exponential distribution of parameter $\gamma_{+}:=\frac{\widetilde{\mu}_C + \sqrt{\widetilde{\mu}_C^2 + 2 \mu_F\sigma^2_C}}{\sigma^2_C}>1$ (see, e.g., \cite{Bertoin}, Chapter VII), then (\ref{remarkcostCD2}) may be rewritten as
$$\int_{0}^{\infty}\gamma_{+} e^{x(1 - \gamma_{+})}R_c(a e^x) dx = \frac{\mu_F (1 +\beta_{+})}{\beta_{+}}.$$
\end{remark}


\appendix

\section{The Variational Approach in the Case of Time-Dependent \\ Coefficients}
\label{solutionInvestmentProblem}
\renewcommand{\theequation}{A-\arabic{equation}}

In this Appendix we revisit the solution of problem (\ref{optimalproblem}) obtained in Chiarolla and Haussmann \cite{Chiarolla2} by a variational approach and we generalize some of their results to the case of deterministic, time-dependent coefficients of the controlled diffusion (cf.\ Assumption \ref{detrminsticcoeff}).

Denote by $C^{t,y,\nu}(s)$ the capacity process starting at time $t \in [0,T)$ from $y$, controlled by $\nu$, with dynamics
\beq
\label{capacityimb}
\left\{
\begin{array}{ll}
dC^{t,y,\nu}(s)= C^{t,y,\nu}(s)[ -\mu_C(s) ds + \sigma_C(s) dW(s)] + f_C(s)d\nu(s),\,\,\,s\in[t,T), \\ \\
C^{t,y,\nu}(t)=y > 0.
\end{array}
\right.
\eeq
Hence
$$C^{t,y,\nu}(s)=\frac{C^{0}(s)}{C^{0}(t)}\bigg\{y + \int_{[t,s)}\frac{C^{0}(t)}{f_C(u)C^{0}(u)}d\nu(u)\bigg\}$$
with $C^{0}$ as defined in (\ref{GBM}).

To simplify notation write
\beq
\label{tildeC}
\widetilde{C}^{t}(s):=C^{t,1,0}(s)=\frac{C^{0}(s)}{C^{0}(t)}=e^{-\int_{t}^s(\mu_C(u) + \frac{1}{2}\sigma_C^2(u))du  + \int_{t}^s \sigma_C(u) dW(u)}, \quad Y^{t,y}(s):=y\widetilde{C}^{t}(s),
\eeq
and note that these processes are $\widetilde{\mathcal{F}}_{t,s}:=\sigma\{W(u)-W(t), t \leq u \leq s\}$-measurable.

To $C^{t,y,\nu}$ we associate the expected total profit, net of investment costs, given by
\beq
\label{profitimbedded}
J_{t,y}(\nu)=\mathbb{E}\bigg\{\int_t^{T} e^{-\int_t^s \mu_F(u) du}\,R(C^{t,y,\nu}(s))ds - \int_{[t,T)} e^{-\int_t^s \mu_F(u)du} d\nu(s) \bigg\},
\eeq
and the corresponding optimal investment problem is
\beq
\label{optimalproblemimbedded}
V(t,y):=\sup_{\nu \in \mathcal{S}_t}J_{t,y}(\nu),
\eeq
where
\begin{eqnarray*}
\mathcal{S}_t\hspace{-0.25cm}&:=&\hspace{-0.25cm}\{\nu:\Omega \times [t,T] \mapsto  \mathbb{R}_{+}\,\,\mbox{nondecreasing,\,\,left-continuous,\,\,adapted\,\,s.t.}\,\, \nu(t)=0,\,\,\mathbb{P}\mbox{-a.s.}\}
\end{eqnarray*}
is the convex set of irreversible investments.

We define the opportunity cost of not investing until time $s$ as (compare with \cite{Chiarolla2}, Section $3$)
\beq
\label{zetaimbedded}
\zeta^{t,y,T}(s):= \int_t^{s} e^{-\int_t^u \mu_F(r) dr }\,\widetilde{C}^t(u) R_c(y\widetilde{C}^t(u))du + e^{-\int_t^s \mu_F(r)dr}\widetilde{C}^t(s)\frac{1}{f_C(s)}\mathds{1}_{\{s < T\}},
\eeq
and the optimal stopping problem (compare with \cite{Chiarolla2}, eq.\ ($3.1$))
\beq
\label{optstoppprimbedded}
Z^{t,y,T}(s):= \essinf_{s \leq \tau \leq T}\mathbb{E}\left\{\zeta^{t,y,T}(\tau)\big|\widetilde{\mathcal{F}}_{t,s}\right\}.
\eeq
Denoting by $\mathcal{Z}^{t,y,T}(\cdot)$ the right-continuous with left-limits modification of $Z^{t,y,T}(\cdot)$, for $s=t$ we set $v(t,y):=\mathcal{Z}^{t,y,T}(t)$, so that up to a null set,
\begin{eqnarray}
\label{vimbedded}
v(t,y) \hspace{-0.25cm}&=&\hspace{-0.25cm} \inf_{t \leq \tau \leq T}\mathbb{E}\bigg\{\,\int_t^{\tau} e^{-\int_{t}^{u} \mu_F(r)dr}\,\widetilde{C}^t(u) R_c\left(y\widetilde{C}^t(u)\right)du \nonumber \\
&&\hspace{3.5cm}+\, e^{-\int_{t}^{\tau}\mu_F(r)dr}\widetilde{C}^t(\tau)\frac{1}{f_C(\tau)}\mathds{1}_{\{\tau < T\}}\,\bigg\}.
\end{eqnarray}
Now, the results in \cite{KaratzasBaldursson}, Proposition $2$ and Theorem $3$, guarantee that for $t \in [0,T)$ the stopping time
\beq
\label{taustarimbedded}
\tau^{*}(t,y)=\inf\left\{s \in [t,T): \mathcal{Z}^{t,y,T}(s)=\zeta^{t,y,T}(s)\right\} \wedge T
\eeq
is optimal for (\ref{optstoppprimbedded}), its left-continuous inverse (modulo a shift)
\beq
\label{nubarratoimbedded}
\overline{\nu}^{y}(s-t)=[\sup\left\{z \geq y: \tau^{*}(t,y) < s-t\right\} -y ]^{+}, \qquad s \in [t,T),
\eeq
is related to the optimal control $\hat{\nu}$ through $\hat{\nu}(s-t)=\int_{[t,s)}\frac{C^0(u-t)}{f_C(u-t)}d\overline{\nu}^{y}(u-t)$ (cf.\ Remark \ref{trovonubarrato}), and the function $v(t,y)$ is the shadow value of installed capital, i.e.
$$v(t,y)=\frac{\partial}{\partial y}V(t,y).$$

\begin{theorem}
\label{taustarvimbeddedThm}
Under Assumption \ref{detrminsticcoeff}, for every $(t,y)$ in $[0,T) \times (0,\infty)$ the optimal stopping time (\ref{taustarimbedded}) may be written as
\beq
\label{taustarvimbedded}
\tau^{*}(t,y)=\inf\bigg\{s \in [t,T): v(s,Y^{t,y}(s))=\frac{1}{f_C(s)}\bigg\} \wedge T,
\eeq
with $v$ as in (\ref{vimbedded}) and $Y^{t,y}$ as in (\ref{tildeC}).
\end{theorem}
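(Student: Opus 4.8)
The plan is to express the Snell envelope $\mathcal{Z}^{s,y,T}$ in terms of the value function $v$ evaluated along the uncontrolled capacity $Y^{s,y}$, and then simply read off the two descriptions of the stopping set. Fix $(s,y)$ and a time $t\in[s,T)$. The starting point is the multiplicative flow property $\widetilde{C}^s(u)=\widetilde{C}^s(t)\,\widetilde{C}^t(u)$ for $u\geq t$ (cf. (\ref{tildeC})), together with the factorization $e^{-\int_s^u\mu_F(r)dr}=e^{-\int_s^t\mu_F(r)dr}\,e^{-\int_t^u\mu_F(r)dr}$. Inserting these into the definition (\ref{zetaimbedded}) of $\zeta^{s,y,T}(\tau)$ for $\tau\geq t$, I would split the opportunity cost into its $\widetilde{\mathcal{F}}_{s,t}$-measurable part
\[
A(t):=\int_s^{t} e^{-\int_s^u \mu_F(r) dr}\,\widetilde{C}^s(u)\,R_c\big(y\widetilde{C}^s(u)\big)\,du
\]
and a remainder in which every occurrence of $y\widetilde{C}^s(u)$ becomes $\big(y\widetilde{C}^s(t)\big)\widetilde{C}^t(u)=Y^{s,y}(t)\,\widetilde{C}^t(u)$. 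The common factor $e^{-\int_s^t\mu_F(r)dr}\widetilde{C}^s(t)$, which is $\widetilde{\mathcal{F}}_{s,t}$-measurable and strictly positive, can then be pulled out of $\mathbb{E}\{\zeta^{s,y,T}(\tau)\,|\,\widetilde{\mathcal{F}}_{s,t}\}$.

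The heart of the argument is to take the essential infimum over $\tau\in[t,T]$ and identify the resulting conditional value with $v\big(t,Y^{s,y}(t)\big)$. Because $\widetilde{C}^t(\cdot)$ is independent of $\widetilde{\mathcal{F}}_{s,t}$ while $Y^{s,y}(t)$ is $\widetilde{\mathcal{F}}_{s,t}$-measurable, this is precisely a freezing (Markov) step, which I would carry out by the same canonical-space decomposition used in the proof of Proposition \ref{propYtildevty}: one writes $\tau=t+\tau'$ with $\tau'$ a stopping time of the shifted filtration, conditions on the path up to $t$, and integrates the independent future increments, so that the conditional essinf coincides with the deterministic value function (\ref{vimbedded}) evaluated at the random initial datum $Y^{s,y}(t)$. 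This yields
\[
Z^{s,y,T}(t)=A(t)+e^{-\int_s^t \mu_F(r) dr}\,\widetilde{C}^s(t)\,v\big(t,Y^{s,y}(t)\big),
\]
and, passing to right-continuous modifications (legitimate since $A$ and $\widetilde{C}^s$ have continuous paths and $v(\cdot,\cdot)$ admits a regular version), the same identity holds for $\mathcal{Z}^{s,y,T}$.

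It then remains to compare this with $\zeta^{s,y,T}(t)=A(t)+e^{-\int_s^t \mu_F(r) dr}\widetilde{C}^s(t)\frac{1}{f_C(t)}\mathds{1}_{\{t<T\}}$. Subtracting $A(t)$ and dividing through by the strictly positive factor $e^{-\int_s^t\mu_F(r)dr}\widetilde{C}^s(t)$, the equality $\mathcal{Z}^{s,y,T}(t)=\zeta^{s,y,T}(t)$ becomes equivalent, on $\{t<T\}$, to $v\big(t,Y^{s,y}(t)\big)=\frac{1}{f_C(t)}$. Substituting this equivalence into the definition (\ref{taustarimbedded}) of $\tau^{*}(s,y)$ gives exactly (\ref{taustarvimbedded}).

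The main obstacle is the freezing step: one must justify interchanging the essinf with the conditioning and, more delicately, that the conditional essinf over stopping times $\tau\geq t$ equals the \emph{deterministic} function $v(t,\cdot)$ composed with $Y^{s,y}(t)$, rather than some larger quantity. This hinges on the measurable parametrization of stopping times over the product canonical space and on the independence of post-$t$ increments, precisely as established for Proposition \ref{propYtildevty}; reusing that construction is what makes the identification rigorous. A minor secondary point is to verify that passing to the right-continuous modification preserves the pointwise identity, which follows from the path continuity of $A$ and $\widetilde{C}^s$.
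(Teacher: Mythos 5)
Your proposal is correct and follows essentially the same route as the paper's proof: the same split of $\zeta^{s,y,T}$ into its $\widetilde{\mathcal{F}}_{s,t}$-measurable part plus a remainder, the same use of the flow property $\widetilde{C}^s(u)=\widetilde{C}^s(t)\widetilde{C}^t(u)$ to factor out $e^{-\int_s^t\mu_F(r)dr}\widetilde{C}^s(t)$, and the same canonical-space freezing argument borrowed from Proposition \ref{propYtildevty} to identify the conditional essential infimum with $v(t,Y^{s,y}(t))$. Your identity $Z^{s,y,T}(t)=A(t)+e^{-\int_s^t\mu_F(r)dr}\widetilde{C}^s(t)\,v(t,Y^{s,y}(t))$ is exactly the paper's equation (\ref{taustarvimbedded3}) rewritten, and the final comparison with $\zeta^{s,y,T}(t)$ matches the paper's concluding step.
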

\begin{proof}
Recall that $Y^{t,y}(s)=y\widetilde{C}^t(s)$ and (\ref{zetaimbedded}). Then, from (\ref{optstoppprimbedded}) we may write
\begin{eqnarray}
\label{taustarvimbedded1}
\mathcal{Z}^{t,y,T}(s) &\hspace{-0.25cm} = \hspace{-0.25cm}& \essinf_{s \leq \tau \leq T} \mathbb{E}\bigg\{\int_t^s e^{-\int_t^u \mu_F(r) dr }\,\widetilde{C}^t(u) R_c\left(Y^{t,y}(u)\right)du  \nonumber \\
&&\hspace{2.7cm} +   \int_s^{\tau} e^{-\int_t^u \mu_F(r) dr }\,\widetilde{C}^t(u) R_c\left(Y^{t,y}(u)\right)du \nonumber \\
&&\hspace{2.7cm} + e^{-\int_t^s \mu_F(r)dr}e^{-\int_s^{\tau} \mu_F(r)dr}\widetilde{C}^t(\tau)\frac{1}{f_C(\tau)}\mathds{1}_{\{\tau < T\}} \Big|\widetilde{\mathcal{F}}_{t,s}\bigg\}  \\
& \hspace{-0.25cm} = \hspace{-0.25cm} &\zeta^{t,y,T}(s) + \essinf_{s \leq \tau \leq T} \mathbb{E}\bigg\{\int_s^{\tau} e^{-\int_t^u \mu_F(r) dr }\,\widetilde{C}^t(u) R_c\left(Y^{t,y}(u)\right)du  \nonumber \\
&& \hspace{0.6cm} + e^{-\int_t^s \mu_F(r)dr}\Big(e^{-\int_s^{\tau} \mu_F(r)dr}\widetilde{C}^t(\tau)\frac{1}{f_C(\tau)}\mathds{1}_{\{\tau < T\}} - \widetilde{C}^t(s)\frac{1}{f_C(s)}\mathds{1}_{\{s < T\}}\Big)\Big|\widetilde{\mathcal{F}}_{t,s}\bigg\}. \nonumber
\end{eqnarray}
Notice now that
$$\widetilde{C}^t(u)=\widetilde{C}^t(s)\widetilde{C}^s(u),\,\,\,\forall u \geq t,\,\,\,\,\,\,\mbox{and}\,\,\,\,\,\,e^{-\int_t^{\tau} \mu_F(r)dr}\widetilde{C}^t(\tau)= e^{-\int_t^{s} \mu_F(r)dr}e^{-\int_s^{\tau} \mu_F(r)dr}\widetilde{C}^t(s)\widetilde{C}^s(\tau).$$
Hence for $s < T$ we have
\begin{eqnarray}
\label{taustarvimbedded2}
\mathcal{Z}^{t,y,T}(s) &\hspace{-0.25cm} = \hspace{-0.25cm}& \zeta^{t,y,T}(s)  \\
&& + e^{-\int_t^{s} \mu_F(r)dr}\widetilde{C}^t(s)\essinf_{s \leq \tau \leq T} \mathbb{E}\bigg\{\int_s^{\tau} e^{-\int_s^u \mu_F(r) dr }\,\widetilde{C}^s(u) R_c\left(Y^{t,y}(s)\widetilde{C}^s(u)\right)du  \nonumber \\
&& \hspace{4.7cm} + e^{-\int_s^{\tau} \mu_F(r)dr}\widetilde{C}^t(\tau)\frac{1}{f_C(\tau)}\mathds{1}_{\{\tau < T\}} - \frac{1}{f_C(s)} \Big|\widetilde{\mathcal{F}}_{t,s}\bigg\}. \nonumber
\end{eqnarray}
In order to take care of the conditioning in (\ref{taustarvimbedded2}) we proceed exactly as in the proof of Proposition \ref{propYtildevty} and recalling (\ref{vimbedded}), from (\ref{taustarvimbedded2}) we get
\beq
\label{taustarvimbedded3}
\mathcal{Z}^{t,y,T}(s) = \zeta^{t,y,T}(s) + e^{-\int_t^{s} \mu_F(r)dr}\widetilde{C}^t(s)\left(v(s,Y^{t,y}(s))-\frac{1}{f_C(s)}\right).
\eeq
Finally, (\ref{taustarimbedded}) and (\ref{taustarvimbedded3}) imply
\begin{eqnarray}
\tau^{*}(t,y)&\hspace{-0.25cm} = \hspace{-0.25cm}&\inf\{s \in [t,T): \mathcal{Z}^{t,y,T}(s)=\zeta^{t,y,T}(s)\} \wedge T \nonumber \\
&\hspace{-0.25cm} = \hspace{-0.25cm}&\inf\bigg\{s \in [t,T): e^{-\int_t^{s} \mu_F(r)dr}\widetilde{C}^t(s)\left(v(s,Y^{t,y}(s))-\frac{1}{f_C(s)}\right)= 0\bigg\} \wedge T   \nonumber \\
&\hspace{-0.25cm} = \hspace{-0.25cm}&\inf\bigg\{s \in [t,T): v(s,Y^{t,y}(s))=\frac{1}{f_C(s)}\bigg\} \wedge T.
\end{eqnarray}
\end{proof}

Notice that if $\widetilde{\mathbb{E}}\left \{\cdot \right\}$ is the expectation w.r.t.\ $\widetilde{\mathbb{P}}$ (cf.\ (\ref{RADON}) for its definition), then by Girsanov Theorem (\ref{vimbedded}) may also be written as
\begin{eqnarray}
\label{vtyptilde}
v(t,y)&\hspace{-0.25cm}  = \hspace{-0.25cm} & \inf_{t \leq \tau \leq T}\widetilde{\mathbb{E}}\bigg\{\,\int_t^{\tau} e^{-\int_t^u \overline{\mu}(r)dr}\,R_c\left(Y^{t,y}(u)\right)du + e^{-\int_t^{\tau}\overline{\mu}(r)dr}\frac{1}{f_C(\tau)}\mathds{1}_{\{\tau < T\}}\,\bigg\}\nonumber 
\end{eqnarray}
with $\overline{\mu}(t)= \mu_F(t) + \mu_C(t)$, for $t \in [0,T]$, and where the second equality is due to (\ref{tildeC}).
The value function $v(t,y)$ is expected to be the solution of a variational inequality similar to that obtained in Chiarolla and Haussmann \cite{Chiarolla2} under Markovian restrictions (cf.\ \cite{Chiarolla2}, Assumption-[M]) and with a Cobb-Douglas production function (cf.\ \cite{Chiarolla2}, eq.\ $(4.5)$ and Theorem $4.4$).

\bigskip

\textbf{Acknowledgments.} The authors thankfully acknowledge two anonymous referees for their pertinent and useful comments.


\end{document}